\numberwithin{equation}{section}
\theoremstyle{plain}
\newtheorem{thm}{Theorem}[section]
\newtheorem{prop}[thm]{Proposition}
\newtheorem{lem}[thm]{Lemma}
\newtheorem{cor}[thm]{Corollary}
\theoremstyle{definition}
\newtheorem{dfn}[thm]{Definition}
\theoremstyle{remark}
\newtheorem{rem}[thm]{Remark}
\newtheorem*{ack}{Acknowledgment}
\newcommand{\set}[1]{\{\,{#1}\,\}}
\newcommand{\R}{\mathbb R}
\newcommand{\N}{\mathbb N}
\DeclareMathOperator{\dis}{dis}
\DeclareMathOperator{\supp}{supp}
\DeclareMathOperator{\id}{id}
\DeclareMathOperator{\pr}{pr}
\DeclareMathOperator{\diam}{diam}
\DeclareMathOperator{\dconc}{{\it d}_{{\rm conc}}}
\title{Extension of Gromov's Lipschitz order to with additive errors}
\author{Hiroki Nakajima}
\address{Mathematical Sciences Course, Ehime University, Matsuyama 790-8577, JAPAN}
\email{nakajima.hiroki.nz@ehime-u.ac.jp}
\date{\today}
\keywords{metric measure space, box distance, transport plans, couplings, Gromov-Prokhorov distance, Lipschitz order}
\thanks{The author is supported by JSPS KAKENHI Grant Number 22K13908}
\begin{document}
\maketitle
\begin{abstract}
Gromov's Lipschitz order is an order relation on the set of metric measure spaces. One of the compactifications of the space of isomorphism classes of metric measure spaces equipped with the concentration topology is constructed by using the Lipschitz order. The concentration topology is deeply related to the concentration of measure phenomenon.
In this paper, we extend the Lipschitz order to that with additive errors and prove useful properties. We also discuss the relation of it to a map with the property of $1$-Lipschitz up to an additive error.
\end{abstract}
\tableofcontents
\section{Introduction}
In some high-dimensional space, it is observed that any 1-Lipschitz function is close to a constant function. That principle is called the concentration of measure phenomenon (see \cite{Led:conc}\cite{Levy:conc}\cite{MilV:conc}). The observable metric $\dconc$ on the space, say $\mathcal X$, of isomorphism classes of metric measure spaces is introduced by M. Gromov based on the concentration of measure phenomenon in \cite{Gmv:green}. The compactification of $(\mathcal X,\dconc)$ is also constructed by him in \cite{Gmv:green}. The box metric $\square$ (Definition \ref{dfn:boxOptimal}) is a metric on $\mathcal X$, which is more elementary than the observable metric $\dconc$. The Lipschitz order $\prec$ (Definition \ref{def:Lip_ord})  is a partial order on $\mathcal X$. The box metric $\square$ and the Lipschitz order $\prec$ are important because the compactification of $(\mathcal X,\dconc)$ in \cite{Gmv:green} is constructed by these concepts.
Note that the box metric $\square$ is equivalent to the Gromov-Prokhorov metric \cite{Loh:dGP}.
The topology given by $\dconc$ is strictly weaker than the topology given by $\square$. It is surprising that the compactification of the metric space $(\mathcal X,\dconc)$ is constructed by using more elementary metric space $(\mathcal X,\square)$.

When considering the compactification of the $(\mathcal X,\dconc)$, we need to know the relation between the Lipschitz order $\succ$ and the box metric $\square$. In this paper, we introduce an extension of the Lipschitz order to that with additive errors and extend the following theorem.
\begin{thm}[Theorem 4.35 in \cite{Shioya:mmg}]\label{thm:box-stable_noError}
Let $X$, $Y$, $X_n$, and $Y_n$ be mm-spaces, $n=1,2,\dots$. We assume that $X_n$ and $Y_n$ $\square$-converge to $X$ and $Y$, respectively.
If $X_n\succ Y_n$ for any positive integer $n$, then we have $X\succ Y$.
\end{thm}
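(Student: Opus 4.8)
The plan is to move from maps to transport plans, glue them together, pass to a weak limit, and read a $1$-Lipschitz measure-preserving map off the support of the limit plan. First I would set up the data. Since $X_n \succ Y_n$, fix for each $n$ a $1$-Lipschitz Borel map $f_n \colon X_n \to Y_n$ with $(f_n)_*\mu_{X_n} = \mu_{Y_n}$, and let $\gamma_n := (\id_{X_n}, f_n)_* \mu_{X_n} \in \Pi(\mu_{X_n}, \mu_{Y_n})$ be its graph plan; by $1$-Lipschitzness, $d_{Y_n}(y,y') \le d_{X_n}(x,x')$ whenever $(x,y),(x',y') \in \supp \gamma_n$. Since $X_n \to X$ and $Y_n \to Y$ in $\square$, the transport-plan description of the box distance behind Definition~\ref{dfn:boxOptimal} produces plans $\pi_n \in \Pi(\mu_{X_n}, \mu_X)$ and $\rho_n \in \Pi(\mu_{Y_n}, \mu_Y)$ and a null sequence $\varepsilon_n \downarrow 0$ such that the distortion of $\pi_n$ (resp. $\rho_n$) exceeds $\varepsilon_n$ only on a set of $(\pi_n \otimes \pi_n)$- (resp. $(\rho_n \otimes \rho_n)$-) measure at most $\varepsilon_n$. (If Definition~\ref{dfn:boxOptimal} is phrased via parametrizations, the first task is the routine conversion of $\square$-smallness into a plan of this ``distortion off a small set'' form.)

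Next I would glue. Applying the gluing lemma twice --- first along the shared marginal $X_n$ to merge $\pi_n$ with $\gamma_n$, then along $Y_n$ to append $\rho_n$ --- yields a probability measure $\Pi_n$ on $X \times X_n \times Y_n \times Y$ whose two-dimensional marginals on the coordinate pairs $(X_n,X)$, $(X_n,Y_n)$, $(Y_n,Y)$ are $\pi_n$, $\gamma_n$, $\rho_n$; set $\tau_n := (\pr_X \times \pr_Y)_* \Pi_n \in \Pi(\mu_X, \mu_Y)$. The heart of the construction is the resulting chain estimate: off a set of $(\Pi_n \otimes \Pi_n)$-measure at most $2\varepsilon_n$ one has simultaneously that the two middle coordinates are tied by $f_n$ and that both distortions are $\le \varepsilon_n$, and on that set $d_Y(y,y') \le d_{Y_n}(f_n a, f_n a') + \varepsilon_n \le d_{X_n}(a,a') + \varepsilon_n \le d_X(x,x') + 2\varepsilon_n$, so that
\[
  (\tau_n \otimes \tau_n)\bigl(\{\, d_Y(y,y') - d_X(x,x') > 2\varepsilon_n \,\}\bigr) \le 2\varepsilon_n .
\]

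Finally I would pass to the limit. The marginals of $\tau_n$ are the fixed Radon measures $\mu_X, \mu_Y$ on Polish spaces, so $\{\tau_n\}$ is tight; along a subsequence $\tau_n \to \tau$ weakly with $\tau \in \Pi(\mu_X,\mu_Y)$, hence $\tau_n \otimes \tau_n \to \tau \otimes \tau$ weakly. For fixed $\varepsilon > 0$ the set $\{\, d_Y(y,y') - d_X(x,x') > \varepsilon \,\}$ is open in $(X\times Y)^2$ and, by the chain estimate, has $(\tau_n\otimes\tau_n)$-measure tending to $0$; lower semicontinuity of the measure of open sets gives $(\tau\otimes\tau)(\{\, d_Y - d_X > \varepsilon \,\}) = 0$, and letting $\varepsilon \downarrow 0$ yields $d_Y(y,y') \le d_X(x,x')$ for $(\tau\otimes\tau)$-a.e. pair. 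Since $\{\, d_Y(y,y') \le d_X(x,x') \,\}$ is \emph{closed} of full measure it contains $\supp(\tau\otimes\tau) = \supp\tau \times \supp\tau$; thus $\supp\tau$ is the graph of a $1$-Lipschitz map on $\pr_X(\supp\tau)$, which (assuming, as we may, $\supp\mu_X = X$, so this set is dense) extends uniquely to a $1$-Lipschitz $f \colon X \to Y$. Its graph being closed and containing $\supp\tau$ forces $\tau = (\id_X, f)_*\mu_X$, whence $f_*\mu_X = (\pr_Y)_*\tau = \mu_Y$ and $X \succ Y$.

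I expect the main obstacle to be the bookkeeping around the chain estimate: one must check that the ``bad'' events of $\pi_n$ and $\rho_n$, which live on disjoint blocks of coordinates, pull back through the glued $\Pi_n$ and its square so that their masses merely add, and that the event $\{\, d_Y - d_X > \varepsilon \,\}$ is treated with the correct (open-set, liminf) semicontinuity. A second, more conceptual point --- and the reason this paper's with-errors order is the natural language here --- is the final step: the chain estimate says precisely that $X$ dominates $Y$ up to an additive error of order $\varepsilon_n$, and the passage to error $0$ is exactly the closedness, in the error parameter, of the (a.e.) distortion inequality, packaged through the observation that a closed full-measure condition holds on the support. Organizing the argument through the Lipschitz order with additive errors and its closedness properties makes both of these points cleaner.
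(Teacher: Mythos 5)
Your proof is correct, but it takes a genuinely different route from the paper in one key step, so a comparison is worthwhile. The paper never proves this theorem directly: it factors everything through the order $\succ_\varepsilon$ with additive errors, writing $X\succ_{s_n}X_n\succ_0 Y_n\succ_{t_n}Y$ with $s_n=\square(X_n,X)$, $t_n=\square(Y_n,Y)$ (Proposition \ref{prop:boxToLip}), composing by the gluing of couplings and of the sets $S_i$ (Theorem \ref{thm:Lip_main} \eqref{thm:Lip_main:eq3}) to get $X\succ_{s_n+t_n}Y$, then passing to the limit via Theorem \ref{thm:Lip_main} \eqref{thm:Lip_main:eq2} --- which extracts a Kuratowski--Painlev\'e limit $S$ of the closed sets $S_n$ (Theorem \ref{thm:KPcpt}) and uses semicontinuity of $\dis_\succ$ and of $\pi_n(S_n)$ (Lemmas \ref{lem:DisLsConti} and \ref{lem:KP_ProkLsConti}) --- and finally recovering the map from $\dis_\succ\supp\pi=0$ via Theorem \ref{thm:Lip_main} \eqref{thm:Lip_main:eq1}. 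Your gluing and chain estimate are essentially the computation in the proof of Theorem \ref{thm:Lip_main} \eqref{thm:Lip_main:eq3}, and your final paragraph (unique $y$ over each $x$, Cauchy extension over the dense set $\pr_X(\supp\tau)$, then $\tau$ concentrated on the graph) reproduces the paper's proof of Theorem \ref{thm:Lip_main} \eqref{thm:Lip_main:eq1}. What is genuinely different is the limit passage: instead of taking a weak Hausdorff limit of closed sets, you encode the error-$2\varepsilon_n$ domination as smallness of $(\tau_n\otimes\tau_n)\bigl(\{d_Y-d_X>2\varepsilon_n\}\bigr)$ and use only the portmanteau inequality for the open sets $\{d_Y-d_X>\varepsilon\}$ under $\tau_n\otimes\tau_n\to\tau\otimes\tau$. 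This is more elementary (it bypasses Theorem \ref{thm:KPcpt} and Lemmas \ref{lem:disSle}--\ref{lem:DisLsConti} entirely), but it is tied to the zero-error endpoint: the paper's set-based formulation is precisely what makes the condition $X\succ_t Y$ closed for every fixed $t\ge 0$, which is needed for the general Theorem \ref{thm:box-stable}. Two small bookkeeping points: the bad set of pairs has $(\Pi_n\otimes\Pi_n)$-measure at most $4\varepsilon_n$ rather than $2\varepsilon_n$ (a union bound over two points and two couplings), and the extension of the map should land in $\supp m_Y$, which follows from Proposition \ref{prop:suppPi} and closedness of $\supp m_Y$; neither affects the argument.
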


For Borel probability measures on $\R$, the Lipschitz order with additive errors is defined by the author \cite{Nkj:isop}. It is applied to the study of isoperimetric inequalities on metric measure spaces.

\begin{dfn}[Lipschitz order with additive errors]\label{dfn:lipE}
Let $X$ and $Y$ be two mm-spaces, and $\varepsilon\ge 0$ a real number. We define 
\[
\dis_\succ S:=\sup\{d_Y(y,y')-d_X(x,x')\mid (x,y),(x',y')\in S\}
\]
for non-empty closed set $S\subset X\times Y$ and $\dis \emptyset :=0$.
We say that {\it $X$ dominates $Y$ with an additive error $\varepsilon$} and write $X\succ_\varepsilon Y$ if there exist a coupling $\pi\in\Pi(X,Y)$ and a closed set $S\subset X\times Y$ such that $\dis_\succ S\le\varepsilon$ and $\pi(S)\ge 1-\varepsilon$. 
\end{dfn}
We remark that the notation $X\succ_\varepsilon Y$ is also defined in Definition 3.3 in \cite{KY:bdd1} but that is different from Definition \ref{dfn:lipE}. The relation between them is explained in Section \ref{sec:upto}.
The following are key properties of the Lipschitz order with additive errors.
\begin{thm}\label{thm:Lip_main}
Let $X$, $Y$, and $Z$ be three mm-spaces, and $s,t\ge 0$ two real numbers. Then we have the following $(1)$, $(2)$, and $(3)$.
\begin{enumerate}
\item $X\succ Y$ if and only if $X\succ_0 Y$.\label{thm:Lip_main:eq1}
\item $X\succ_{t+\varepsilon} Y$ for any $\varepsilon >0$ if and only if $X\succ_t Y$.\label{thm:Lip_main:eq2}
\item If $X\succ_s Y$ and $Y\succ_t Z$, then $X\succ_{s+t} Z$.\label{thm:Lip_main:eq3}
\end{enumerate}
\end{thm}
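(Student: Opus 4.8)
The plan is to dispatch the three statements separately, with (1) being essentially an unwinding of definitions, (2)(``only if'') an accumulation/compactness argument, and (3) a gluing argument. Throughout I will use repeatedly that the functional $S\mapsto\dis_\succ S$ is unchanged when $S$ is replaced by its closure, since it is the supremum of an inequality that is stable under passing to limits.

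\emph{Part (1).} For the forward implication, take a $1$-Lipschitz map $f\colon X\to Y$ with $f_*\mu_X=\mu_Y$, set $\pi:=(\id_X,f)_*\mu_X\in\Pi(X,Y)$, and let $S:=\supp\pi$. Then $\pi(S)=1$, and any two points of $S$ are limits of points of the form $(x_n,f(x_n))$ and $(x_n',f(x_n'))$, so $d_Y$-distances between them are bounded by the corresponding $d_X$-distances, giving $\dis_\succ S\le0$; hence $X\succ_0 Y$. Conversely, suppose $\pi\in\Pi(X,Y)$ and a closed $S$ satisfy $\dis_\succ S\le0$ and $\pi(S)=1$. If $(x,y),(x,y')\in S$ then $d_Y(y,y')\le d_X(x,x)=0$, so $S$ is the graph of a map $f$ on its $X$-projection $A:=\pr_X S$, and $\dis_\succ S\le 0$ says precisely that $f\colon A\to Y$ is $1$-Lipschitz. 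Since $A$ is analytic, hence universally measurable, and $S\subseteq A\times Y$, we get $\mu_X(A)\ge\pi(S)=1$; as $f$ is uniformly continuous it extends to a $1$-Lipschitz map on $\overline A\supseteq\supp\mu_X$, and $\pi(S)=1$ forces $f_*\mu_X=\mu_Y$, so $X\succ Y$.

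\emph{Part (2).} The ``if'' direction is the monotonicity $X\succ_s Y\Rightarrow X\succ_t Y$ for $s\le t$: the same coupling and set work, since $\dis_\succ S\le s\le t$ and $\pi(S)\ge 1-s\ge1-t$. For the ``only if'' direction, fix for each $n$ witnesses $\pi_n\in\Pi(X,Y)$ and closed $S_n$ with $\dis_\succ S_n\le t+1/n$ and $\pi_n(S_n)\ge1-t-1/n$. Because $\mu_X$ and $\mu_Y$ are tight, the set $\Pi(X,Y)$ is tight, so along a subsequence $\pi_n$ converges weakly to some $\pi\in\Pi(X,Y)$ (marginal maps being weakly continuous). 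The sub-probability measures $\nu_n:=\pi_n|_{S_n}$ are dominated by the $\pi_n$, hence tight, so along a further subsequence $\nu_n$ converges weakly to a measure $\nu$ with $\nu\le\pi$ and $\nu(X\times Y)=\lim_n\nu_n(X\times Y)\ge 1-t$. Put $S:=\supp\nu$; then $\pi(S)\ge\nu(S)=\nu(X\times Y)\ge 1-t$. Finally, each point of $\supp\nu$ is a limit of points of $\supp\nu_n\subseteq S_n$ (by the portmanteau theorem applied to shrinking open neighborhoods, plus diagonalization), so passing to the limit in the defining inequality of $\dis_\succ$ yields $\dis_\succ S\le\limsup_n\dis_\succ S_n\le t$. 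Hence $X\succ_t Y$.

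\emph{Part (3).} This is the gluing lemma. Choose witnesses $(\pi_1,S_1)$ for $X\succ_s Y$ and $(\pi_2,S_2)$ for $Y\succ_t Z$, disintegrate $\pi_1$ over its $X$-marginal as $(\pi_1)_x$ and $\pi_2$ over its $Y$-marginal as $(\pi_2)_y$, and define a probability measure $\hat\pi$ on $X\times Y\times Z$ by $\hat\pi(dx\,dy\,dz)=\mu_X(dx)\,(\pi_1)_x(dy)\,(\pi_2)_y(dz)$; its $(X,Y)$-marginal is $\pi_1$ and its $(Y,Z)$-marginal is $\pi_2$, so its $(X,Z)$-marginal $\pi$ lies in $\Pi(X,Z)$. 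Set $\tilde S:=(S_1\times Z)\cap(X\times S_2)$, a closed subset of $X\times Y\times Z$. By inclusion--exclusion, $\hat\pi(\tilde S)\ge\hat\pi(S_1\times Z)+\hat\pi(X\times S_2)-1=\pi_1(S_1)+\pi_2(S_2)-1\ge(1-s)+(1-t)-1=1-s-t$. Let $p\colon X\times Y\times Z\to X\times Z$ be the projection and put $S:=\overline{p(\tilde S)}$; since $\tilde S\subseteq p^{-1}(p(\tilde S))$ we get $\pi(S)\ge\hat\pi(\tilde S)\ge1-s-t$, and for $(x,z),(x',z')\in p(\tilde S)$, choosing $y,y'$ with $(x,y,z),(x',y',z')\in\tilde S$,
\[
 d_Z(z,z')-d_X(x,x')=\bigl(d_Z(z,z')-d_Y(y,y')\bigr)+\bigl(d_Y(y,y')-d_X(x,x')\bigr)\le \dis_\succ S_2+\dis_\succ S_1\le s+t,
\]
which survives passage to the closure, so $\dis_\succ S\le s+t$ and $X\succ_{s+t} Z$.

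\emph{Main obstacle.} Parts (1) and the monotonicity in (2) are routine. The crux is the ``only if'' direction of (2): one must pass to the limit \emph{simultaneously} in the couplings and in the sets $S_n$ while retaining control of both $\pi(S)$ and $\dis_\succ S$. The device that makes this work is to replace $S_n$ by the measure $\nu_n=\pi_n|_{S_n}$, take $S$ to be the support of a weak subsequential limit $\nu$, and exploit that supports behave lower-semicontinuously under weak convergence together with the closure-invariance of $\dis_\succ$. In (3) the only nuisance is that $p(\tilde S)$ need not be closed, which is handled by taking its closure.
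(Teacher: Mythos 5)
Your proposal is correct, and parts (1) and (3) follow essentially the same path as the paper (graph of a $1$-Lipschitz map via a Cauchy-sequence/extension argument for (1); gluing, inclusion–exclusion, and closure of the projected set for (3)). Part (2) is where you genuinely diverge. The paper extracts a Kuratowski--Painlev\'e (weak Hausdorff) limit $S$ of the sets $S_n$ via Beer's compactness theorem and then invokes two separate semicontinuity lemmas: lower semicontinuity of $\dis_\succ$ under weak Hausdorff convergence (proved through the estimate $\dis_\succ U_t(S)\le\dis_\succ S+2t$) and upper semicontinuity of $\mu(S)$ along the convergence $\pi_n\to\pi$, $S_n\to S$. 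You instead pass to the restricted measures $\nu_n:=\pi_n|_{S_n}$, take a weak subsequential limit $\nu\le\pi$, and set $S:=\supp\nu$; the mass bound then falls out of $\pi(S)\ge\nu(S)=\nu(X\times Y)\ge 1-t$, and the distortion bound from the fact that every point of $\supp\nu$ is approximated by points of $\supp\nu_n\subset S_n$. This is a tidy alternative: it needs only Prokhorov's theorem and avoids the set-convergence machinery entirely, at the price of two small details you should spell out --- that $\nu\le\pi$ (decompose $\pi_n=\nu_n+(\pi_n-\nu_n)$, pass both summands to the limit along a common subsequence) and that no mass escapes, i.e.\ $\nu(X\times Y)=\lim_n\pi_n(S_n)$, which follows from tightness of $\Pi(m_X,m_Y)$. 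In part (1) there is one loose end worth closing: you build $f$ on $\overline{\pr_1 S}$, and $S$ need not lie in $\supp m_X\times\supp m_Y$, so it is not immediate that $f$ takes values in $\supp m_Y$ as Definition \ref{def:Lip_ord} requires; either replace $S$ by $\supp\pi$ (as the paper does, using Proposition \ref{prop:suppPi}) or note that $f^{-1}(\supp m_Y)$ is closed of full $m_X$-measure once $f_*m_X=m_Y$ is established, hence contains $\supp m_X$. Neither point is a gap in substance.
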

A basic relation between the box metric $\square$ and the Lipschitz order with additive errors is as follows.
\begin{prop}\label{prop:boxToLip}
Let $X$ and $Y$ be two mm-spaces. If $\square(X,Y)\le\varepsilon$, then we have $X\succ_\varepsilon Y$ and $Y\succ_\varepsilon X$.
\end{prop}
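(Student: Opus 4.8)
The plan is to unpack Definition~\ref{dfn:boxOptimal}, turn the data witnessing $\square(X,Y)\le\varepsilon$ into a coupling together with a distinguished closed set on which the two distance functions are nearly equal, and then absorb the unavoidable slack using Theorem~\ref{thm:Lip_main}\,(\ref{thm:Lip_main:eq2}).

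Fix $\delta>0$. Since $\square(X,Y)\le\varepsilon<\varepsilon+\delta$, there exist Borel measurable parametrizations $\varphi\colon I\to X$ and $\psi\colon I\to Y$ with $\varphi_*\mathcal L^1=\mu_X$ and $\psi_*\mathcal L^1=\mu_Y$ (the measures of $X$ and $Y$), where $\mathcal L^1$ is the Lebesgue measure on $I=[0,1)$, together with a Borel set $I_0\subset I$ satisfying $\mathcal L^1(I_0)\ge 1-\varepsilon-\delta$ and
\[
|d_X(\varphi(s),\varphi(t))-d_Y(\psi(s),\psi(t))|\le\varepsilon+\delta\qquad\text{for all }s,t\in I_0 .
\]
I then set $\pi:=(\varphi,\psi)_*\mathcal L^1$; its marginals are $\mu_X$ and $\mu_Y$, so $\pi\in\Pi(X,Y)$, and $S:=\overline{(\varphi,\psi)(I_0)}\subset X\times Y$, which is closed.

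It remains to verify the two conditions of Definition~\ref{dfn:lipE}. For $(x,y),(x',y')\in(\varphi,\psi)(I_0)$ we may write $x=\varphi(s)$, $x'=\varphi(t)$, $y=\psi(s)$, $y'=\psi(t)$ with $s,t\in I_0$, whence $d_Y(y,y')-d_X(x,x')\le\varepsilon+\delta$; since $d_X$ and $d_Y$ are continuous, this bound persists on the closure, so $\dis_\succ S\le\varepsilon+\delta$. Moreover $(\varphi,\psi)^{-1}(S)\supseteq I_0$, hence $\pi(S)\ge\mathcal L^1(I_0)\ge 1-\varepsilon-\delta$. This gives $X\succ_{\varepsilon+\delta}Y$, and exchanging the roles of $X$ and $Y$ (equivalently, using the set $\{(y,x):(x,y)\in S\}$ together with the other inequality contained in the absolute value) gives $Y\succ_{\varepsilon+\delta}X$. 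As $\delta>0$ was arbitrary, Theorem~\ref{thm:Lip_main}\,(\ref{thm:Lip_main:eq2}) yields $X\succ_\varepsilon Y$ and $Y\succ_\varepsilon X$. The step that needs care is replacing $(\varphi,\psi)(I_0)$ by its closure: parametrizations are only measurable, so this image is in general neither closed nor even Borel and cannot be used directly as the set in Definition~\ref{dfn:lipE}; passing to the closure restores closedness without increasing $\dis_\succ$ (by continuity of the metrics) and can only increase the $\pi$-mass. If Definition~\ref{dfn:boxOptimal} is instead stated directly in terms of a coupling and a Borel set on which $d_X$ and $d_Y$ are $(\varepsilon+\delta)$-close, the same computation applies verbatim with $S$ the closure of that set, and no parametrizations are needed.
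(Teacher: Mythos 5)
Your argument is correct, but it takes an unnecessary detour. The paper's Definition~\ref{dfn:boxOptimal} is already the coupling formulation: $\square(X,Y)\le\varepsilon$ means that for every $\delta>0$ there exist a \emph{closed} set $S\subset X\times Y$ and $\pi\in\Pi(m_X,m_Y)$ with $\max\{\dis S,\,1-\pi(S)\}\le\varepsilon+\delta$, so together with Proposition~\ref{prop:dis_le} (which gives $\dis_\succ S\le\dis S$, and likewise for the transposed set, since $\dis$ is symmetric in the two inequalities hidden in the absolute value) one reads off $X\succ_{\varepsilon+\delta}Y$ and $Y\succ_{\varepsilon+\delta}X$ immediately, and Theorem~\ref{thm:Lip_main}\,(\ref{thm:Lip_main:eq2}) removes the $\delta$. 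This is exactly the route the paper indicates. You instead unpack the \emph{original} parametrization definition of $\square$ (Gromov's $\varphi,\psi\colon I\to X,Y$), which is not the definition stated in this paper and forces you to rebuild the coupling as $(\varphi,\psi)_*\mathcal L^1$ and to repair the non-closedness of $(\varphi,\psi)(I_0)$ by passing to its closure. Those repairs are carried out correctly (continuity of the metrics preserves the $\dis_\succ$ bound on the closure, and the closure only gains $\pi$-mass), and you correctly flag the measurability pitfall, so the proof stands; it just re-proves, in this special case, part of the equivalence between the two definitions of $\square$ that the paper has already imported. Your closing remark shows you see this: with Definition~\ref{dfn:boxOptimal} as given, the entire proof is the two-line application of Proposition~\ref{prop:dis_le} plus Theorem~\ref{thm:Lip_main}\,(\ref{thm:Lip_main:eq2}).
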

Proposition \ref{prop:boxToLip} follows from Proposition \ref{prop:dis_le} and Definition \ref{dfn:boxOptimal}. 
The following Theorem \ref{thm:box-stable} is an extension of Theorem \ref{thm:box-stable_noError}. In fact, Theorem \ref{thm:box-stable} implies Theorem \ref{thm:box-stable_noError} if $\varepsilon_n=\varepsilon=0$. The original proof of Theorem \ref{thm:box-stable_noError} is complicated but the proof of Theorem \ref{thm:box-stable} is much  clearer by owing to Definition \ref{dfn:lipE}.
\begin{thm}\label{thm:box-stable}
Let $X$, $Y$, $X_n$, and $Y_n$ be mm-spaces, $n=1,2,\dots$. We assume that $X_n$ and $Y_n$ $\square$-converge to $X$ and $Y$, respectively.
Let $\varepsilon$ and $\varepsilon_n$ be non-negative real numbers, $n=1,2,\dots$. We assume that $\varepsilon_n$ converges to $\varepsilon$. If  $X_n\succ_{\varepsilon_n} Y_n$ for any positive integer $n$, then we have $X\succ_\varepsilon Y$.
\end{thm}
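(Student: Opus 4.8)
The plan is to deduce Theorem~\ref{thm:box-stable} from the properties of $\succ_\varepsilon$ that are already available, namely Proposition~\ref{prop:boxToLip} (box distance controls the Lipschitz order with additive errors) and parts \eqref{thm:Lip_main:eq2} and \eqref{thm:Lip_main:eq3} of Theorem~\ref{thm:Lip_main} (limiting behaviour in the error parameter, and additivity of errors under composition), together with the elementary fact that $\succ_\varepsilon$ is monotone in $\varepsilon$. First I would unwind the $\square$-convergence hypotheses: set $\delta_n:=\square(X,X_n)$ and $\delta_n':=\square(Y_n,Y)$, so that $\delta_n\to 0$ and $\delta_n'\to 0$. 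By Proposition~\ref{prop:boxToLip} this gives $X\succ_{\delta_n}X_n$ and $Y_n\succ_{\delta_n'}Y$ for every $n$.

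Next I would chain the three comparisons $X\succ_{\delta_n}X_n$, $X_n\succ_{\varepsilon_n}Y_n$, $Y_n\succ_{\delta_n'}Y$. Applying Theorem~\ref{thm:Lip_main}\eqref{thm:Lip_main:eq3} twice — first to the first two to get $X\succ_{\delta_n+\varepsilon_n}Y_n$, then to that and the third — yields
\[
X\succ_{r_n}Y,\qquad r_n:=\delta_n+\varepsilon_n+\delta_n',\qquad\text{for every }n.
\]
Since $\delta_n,\delta_n'\to 0$ and $\varepsilon_n\to\varepsilon$ by hypothesis, we have $r_n\to\varepsilon$.

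To finish, I would pass to the limit. One first records the monotonicity of $\succ_\varepsilon$: if $X\succ_s Y$ and $s\le t$, then the very same coupling $\pi\in\Pi(X,Y)$ and closed set $S$ witnessing $X\succ_s Y$ also witness $X\succ_t Y$, because $\dis_\succ S\le s\le t$ and $\pi(S)\ge 1-s\ge 1-t$ directly from Definition~\ref{dfn:lipE}. Now fix $\eta>0$ arbitrarily. Because $r_n\to\varepsilon$, there is an $n$ with $r_n\le\varepsilon+\eta$, and then monotonicity upgrades $X\succ_{r_n}Y$ to $X\succ_{\varepsilon+\eta}Y$. As $\eta>0$ was arbitrary, Theorem~\ref{thm:Lip_main}\eqref{thm:Lip_main:eq2} (applied with $t=\varepsilon$) gives $X\succ_\varepsilon Y$, which is the claim.

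The argument is short because all of the analytic work — producing a limiting coupling in $\Pi(X,Y)$ and a limiting "almost non-expanding" set $S\subset X\times Y$ by a tightness/compactness argument — is encapsulated in Theorem~\ref{thm:Lip_main}, and in particular in the closure statement \eqref{thm:Lip_main:eq2}. Thus the only genuinely new content here is the additive bookkeeping of the three error terms, and the sole thing one must be careful about is applying Theorem~\ref{thm:Lip_main}\eqref{thm:Lip_main:eq3} to the correct pairs of spaces in the correct order so that the errors $\delta_n$, $\varepsilon_n$, $\delta_n'$ sum as stated. I do not expect any serious obstacle; if instead one insisted on a self-contained proof, the hard part would be exactly the gluing-and-disintegration step that Theorem~\ref{thm:Lip_main} already performs, namely extracting from couplings $\pi_n\in\Pi(X_n,Y_n)$ with sets $S_n$ satisfying $\dis_\succ S_n\le\varepsilon_n$, $\pi_n(S_n)\ge 1-\varepsilon_n$, together with near-optimal couplings realizing $\square(X,X_n)$ and $\square(Y_n,Y)$, a single coupling in $\Pi(X,Y)$ and closed set in $X\times Y$ with the required properties.
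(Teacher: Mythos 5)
Your proposal is correct and follows essentially the same route as the paper: apply Proposition~\ref{prop:boxToLip} to the box distances, chain with Theorem~\ref{thm:Lip_main}\eqref{thm:Lip_main:eq3}, and pass to the limit via Theorem~\ref{thm:Lip_main}\eqref{thm:Lip_main:eq2}. Your explicit monotonicity step (bridging from $X\succ_{r_n}Y$ with $r_n\to\varepsilon$ to $X\succ_{\varepsilon+\eta}Y$ for all $\eta>0$) is a detail the paper leaves implicit, and you justify it correctly from Definition~\ref{dfn:lipE}.
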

Note that in the case $\varepsilon=0$, Theorem \ref{thm:box-stable} implies  Lemma 3.13 in \cite{KY:bdd1}. We remark that if we adopt Definition 3.3 in  \cite{KY:bdd1} as the definition of $X\succ_\varepsilon Y$, then Theorem \ref{thm:box-stable} does not hold in the case $\varepsilon> 0$ (see Remark \ref{rem:lsCounter}).
The proof of Theorem \ref{thm:box-stable} follows from Theorem \ref{thm:Lip_main} as follows.
\begin{proof}[Proof of Theorem \ref{thm:box-stable}]
We assume that $X_n\succ_{\varepsilon_n} Y_n$. Put 
$s_n:=\square(X_n,X)$ and $t_n:=\square(Y_n,Y)$. By Proposition \ref{prop:boxToLip}, we have
$X\succ_{s_n} X_n$ and $Y_n\succ_{t_n}Y$. By Theorem \ref{thm:Lip_main} \eqref{thm:Lip_main:eq3}, we have $X\succ_{s_n+t_n+\varepsilon_n} Y$. By Theorem \ref{thm:Lip_main} \eqref{thm:Lip_main:eq2}, we have $X\succ_\varepsilon Y$ because $s_n+t_n+\varepsilon_n\to \varepsilon$. This completes the proof.
\end{proof}

We define the {\it unilateral box metric} $\square_\succ\colon \mathcal X\times \mathcal X\to[0,\infty)$ by
\[
\square_\succ(X,Y):=\inf\{\varepsilon \ge 0\mid X\succ_\varepsilon Y\} \quad\text{for $X,Y\in \mathcal X$,}
\]
then we obtain the following Corollary \ref{cor:Lip_main1} as a corollary of Theorem \ref{thm:Lip_main}.
\begin{cor}\label{cor:Lip_main1}
Let $X$, $Y$, and $Z$ be three mm-spaces. Then we have the following $(1)$, $(2)$, and $(3)$.
\begin{enumerate}
\item $X\succ Y$ if and only if $\square_\succ(X,Y)=0$.
\item $\square_\succ(X,Z)\le\square_\succ(X,Y)+\square_\succ(Y,Z)$.
\end{enumerate}
\end{cor}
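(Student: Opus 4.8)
The plan is to deduce both statements directly from Theorem \ref{thm:Lip_main}, using only one auxiliary observation: the relation $\succ_\varepsilon$ is monotone in the error parameter, i.e.\ if $X\succ_{\varepsilon'}Y$ and $\varepsilon'\le\varepsilon$, then $X\succ_\varepsilon Y$. This is immediate from Definition \ref{dfn:lipE}, since a witnessing pair $(\pi,S)$ for error $\varepsilon'$ (so $\dis_\succ S\le\varepsilon'\le\varepsilon$ and $\pi(S)\ge 1-\varepsilon'\ge 1-\varepsilon$) also witnesses error $\varepsilon$. I would record this first, together with its consequence that $X\succ_\varepsilon Y$ holds for every $\varepsilon>\square_\succ(X,Y)$.

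For part $(1)$, the forward direction is immediate: if $X\succ Y$, then $X\succ_0 Y$ by Theorem \ref{thm:Lip_main}\eqref{thm:Lip_main:eq1}, so $0$ lies in the set over which the infimum defining $\square_\succ(X,Y)$ is taken, whence $\square_\succ(X,Y)=0$. For the converse, from $\square_\succ(X,Y)=0$ and the monotonicity remark I obtain $X\succ_\varepsilon Y$ for every $\varepsilon>0$; then Theorem \ref{thm:Lip_main}\eqref{thm:Lip_main:eq2} applied with $t=0$ gives $X\succ_0 Y$, and Theorem \ref{thm:Lip_main}\eqref{thm:Lip_main:eq1} gives $X\succ Y$.

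For part $(2)$, I would set $a:=\square_\succ(X,Y)$ and $b:=\square_\succ(Y,Z)$, fix $\delta>0$, and use the definition of the infimum together with monotonicity to choose $s\le a+\delta$ with $X\succ_s Y$ and $t\le b+\delta$ with $Y\succ_t Z$. Theorem \ref{thm:Lip_main}\eqref{thm:Lip_main:eq3} then yields $X\succ_{s+t}Z$, and since $s+t\le a+b+2\delta$, monotonicity upgrades this to $X\succ_{a+b+2\delta}Z$. Hence $\square_\succ(X,Z)\le a+b+2\delta$, and letting $\delta\to 0$ completes the argument.

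There is no genuine obstacle here: the corollary is a formal consequence of Theorem \ref{thm:Lip_main}. The only point requiring a little care is that the set $\{\varepsilon\ge 0\mid X\succ_\varepsilon Y\}$ defining $\square_\succ$ need not contain its infimum, which is exactly why part $(1)$ invokes Theorem \ref{thm:Lip_main}\eqref{thm:Lip_main:eq2} rather than merely \eqref{thm:Lip_main:eq1}, and why part $(2)$ is phrased through an arbitrary slack $\delta>0$.
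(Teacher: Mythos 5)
Your proof is correct and follows exactly the route the paper intends: the corollary is stated there as an immediate consequence of Theorem \ref{thm:Lip_main}, with no written proof, and your argument supplies precisely the routine details (monotonicity of $\succ_\varepsilon$ in $\varepsilon$, part \eqref{thm:Lip_main:eq2} with $t=0$ for the converse in $(1)$, and the standard $\delta$-slack infimum argument with part \eqref{thm:Lip_main:eq3} for $(2)$). Nothing further is needed.
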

Note that the unilateral box metric is not a metric. We remark that Proposition \ref{prop:boxToLip} implies
\[
\square_\succ(X,Y)\le \square(X,Y)\quad\text{for $X,Y\in\mathcal X$}
\]
and Theorem \ref{thm:box-stable} implies
\[
\square_\succ(\lim_{n\to \infty}X_n,\lim_{n\to \infty}Y_n)\le\liminf_{n\to\infty}\square_\succ(X_n,Y_n)
\]
for any two $\square$-convergent sequences $(X_n)_n$ and $(Y_n)_n$.

In Section \ref{sec:main}, we give a proof of Theorem \ref{thm:Lip_main} which is basic properties of the Lipschitz order with additive errors.
In Section \ref{sec:upto}, we explain the relation between the Lipschitz order with an additive error $\varepsilon$ and $1$-Lipschitz continuity up to $\varepsilon$ (Definition \ref{dfn:upto}).

\begin{ack}
The author would like to thank Daisuke Kazukawa and Takashi Shioya for very valuable
comments.
\end{ack}

\section{Preliminary}
In this section, we present some basics of mm-space.
We refer to \cites{Gmv:green,Shioya:mmg} for more details about the contents of this section. We denote by $\overline{A}$ the topological closure of $A$ for a subset $A$ of a topological space.  
\begin{dfn}[mm-space]
Let $(X,d_X)$ be a complete separable metric space and $m_X$ a Borel probability measure on $X$.
We call such a triple $(X,d_X,m_X)$ an {\it mm-space}.
We sometimes say that $X$ is an mm-space, for which the metric and measure of $X$ are respectively indicated by $d_X$ and $m_X$. 
\end{dfn}

We denote the Borel $\sigma$-algebra over $X$ by $\mathcal B_X$.
For any point $x\in X$, any two subsets $A,B\subset X$, and any real number $r\ge 0$, we define
\begin{align*}
d_X(x,A)&:=\inf\set{d_X(x,y)\mid y\in A} ,\\
U_r^{d_X}(A)&:=\set{y\in X\mid d_X(y,A)<r},\\
B_r^{d_X}(A)&:=\set{y\in X\mid d_X(y,A)\leq r},
\end{align*}
where $\inf \emptyset:=\infty$. The symbol $U_r^{d_X}(A)$ and $B_r^{d_X}(A)$ are often omitted as $U_r(A)$ and $B_r(A)$, respectively. We remark that $U_r(\emptyset)=B_r(\emptyset)=\emptyset$ for any real number $r\ge 0$.  The {\it diameter of $A$} is defined by $\diam A:=\sup_{x,y\in A}d_X(x,y)$ for $A\neq\emptyset$ and $\diam \emptyset:=0$.

Let $Y$ be a topological space and let $p:X\to Y$ be a measurable map from a measure space $(X,m_X)$ to a measurable space $(Y,\mathcal B_Y)$. {\it The push-forward $p_*m_X$ of $m_X$ by the map $p$} is defined as $p_*m_X(A):=m_X(p^{-1}(A))$ for any $A\in \mathcal B_Y$.

\begin{dfn}[support]
Let $\mu$ be a Borel measure on a topological space $X$.
We define the {\it support} $\supp\mu$ of $\mu$ by
\[
\supp\mu:=\{x\in X\mid \text{ $m_X(U)>0$ for any open neighborhood $U$ of $x$} \}.
\]
\end{dfn}
\begin{prop}[Proposition 2.3 in \cite{Nkj:optimal}]\label{prop:push}
Let $X$ and $Y$ be two topological spaces and let $f\colon X\to Y$ be a continuous map. If a Borel measure $\mu$ on $X$ satisfies 
\[
\mu(X\setminus \supp\mu)=0,
\]
then we have
\[
\supp f_*\mu=\overline{f(\supp\mu)}.
\]
\end{prop}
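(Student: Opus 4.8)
The plan is to establish the two inclusions $\overline{f(\supp\mu)}\subseteq\supp f_*\mu$ and $\supp f_*\mu\subseteq\overline{f(\supp\mu)}$ separately; the first is an immediate consequence of continuity of $f$ and needs no hypothesis on $\mu$, while the second is where the assumption $\mu(X\setminus\supp\mu)=0$ is used. It is worth recording at the outset that the support of any Borel measure is a closed set, since its complement $\{x\mid \text{there is an open neighborhood }U\ni x\text{ with }\mu(U)=0\}$ is open. In particular $\supp f_*\mu$ is closed, which will let us reduce the first inclusion to a statement about $f(\supp\mu)$ rather than its closure.

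For the inclusion $\overline{f(\supp\mu)}\subseteq\supp f_*\mu$, I would take any $x\in\supp\mu$ and put $y:=f(x)$. Given an arbitrary open neighborhood $V$ of $y$, continuity of $f$ makes $f^{-1}(V)$ an open neighborhood of $x$, so $f_*\mu(V)=\mu(f^{-1}(V))>0$ because $x\in\supp\mu$. Hence $y\in\supp f_*\mu$, i.e. $f(\supp\mu)\subseteq\supp f_*\mu$; since $\supp f_*\mu$ is closed, passing to the closure gives $\overline{f(\supp\mu)}\subseteq\supp f_*\mu$.

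For the reverse inclusion I would argue by contraposition. Suppose $y\notin\overline{f(\supp\mu)}$. Then there is an open neighborhood $V$ of $y$ disjoint from $f(\supp\mu)$, so $f^{-1}(V)\cap\supp\mu=\emptyset$, that is, $f^{-1}(V)\subseteq X\setminus\supp\mu$. Consequently
\[
f_*\mu(V)=\mu(f^{-1}(V))\le\mu(X\setminus\supp\mu)=0,
\]
and therefore $y\notin\supp f_*\mu$. Combining the two inclusions yields $\supp f_*\mu=\overline{f(\supp\mu)}$.

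There is no serious obstacle here: the proof is a direct unravelling of the definition of support together with continuity of $f$. The only subtlety is bookkeeping — recognizing that the closure on the right-hand side is needed because $f(\supp\mu)$ need not itself be closed, and that the hypothesis $\mu(X\setminus\supp\mu)=0$ is used exactly once, in the displayed inequality, and cannot be dropped: otherwise mass sitting outside $\supp\mu$ could be pushed by $f$ to points outside $\overline{f(\supp\mu)}$, enlarging $\supp f_*\mu$. (In the situations relevant to this paper, where $X$ is a complete separable metric space and $\mu$ a Borel probability measure, the hypothesis holds automatically.)
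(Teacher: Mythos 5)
Your proof is correct and follows essentially the same route as the paper: the inclusion $\overline{f(\supp\mu)}\subseteq\supp f_*\mu$ is argued identically, and your contrapositive treatment of the reverse inclusion is just a pointwise rephrasing of the paper's single computation $f_*\mu(Y\setminus\overline{f(\supp\mu)})\le\mu(X\setminus\supp\mu)=0$, using the hypothesis in exactly the same place.
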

\begin{proof}
Since
\begin{align*}
f_*\mu(Y\setminus \overline{f(\supp\mu)})&=\mu(X\setminus f^{-1}(\overline{f(\supp\mu)}))\\
&\le \mu(X\setminus \supp\mu)=0,
\end{align*}
we have $\supp f_*\mu\subset \overline{f(\supp\mu)}$.

Next, let us prove
\begin{equation}\label{prop:push:goal}
f(\supp \mu)\subset \supp f_*\mu.
\end{equation}
Take any $y\in f(\supp\mu)$ and any open neighborhood $U$ of $y$.
Since $y\in f(\supp\mu)$, there exists $x\in\supp\mu$ such that $y=f(x)$.
The set $f^{-1}(U)$ is an open neighborhood of $x\in\supp\mu$ because $f$ is continuous. Then we have
$
f_*\mu(U)= \mu(f^{-1}(U))>0.
$
This implies that $y\in\supp f_*\mu$ and we obtain \eqref{prop:push:goal}.
Since the set $\supp f_*\mu$ is closed, we have
\[
\overline{f(\supp\mu)}\subset \supp f_*\mu.
\]
This completes the proof.
\end{proof}
\begin{rem}
Let $\mu$ be a Borel measure $\mu$ on a topological space $X$. We have $\mu(X\setminus \supp \mu)=0$ if $X$ is second countable. Even if we assume that $\mu$ is inner regular, the equation $\mu(X\setminus \supp \mu)=0$ holds.
\end{rem}
\begin{dfn}[mm-isomorphism]\label{dfn:mmIso}
Two mm-spaces $X$ and $Y$ are said to be {\it mm-isomorphic} if there exists an isometry $f:\supp m_X\to\supp m_Y$ such that $f_*m_X=m_Y$.
Such an isometry $f$ is called an {\it mm-isomorphism}.
The mm-isomorphism relation is an equivalence relation on the class of mm-spaces.
Denote by $\mathcal X$ the set of mm-isomorphism classes of mm-spaces.
\end{dfn}
\begin{dfn}[Lipschitz order]\label{def:Lip_ord}
Let $X$ and $Y$ be two mm-spaces.
We say that $X$ {\it dominates} $Y$ and write $Y \prec X$ if there exists a 1-Lipschitz map $f:\supp m_X\to \supp m_Y$ satisfying
\[
f_*m_X=m_Y.
\]
We call the relation $\prec$ on $\mathcal X$ the {\it Lipschitz order}.
\end{dfn}
\begin{prop}[Proposition 2.11 in \cite{Shioya:mmg}]\label{prop:lipPartialOrder}
The Lipschitz order $\prec$ is a partial order relation on $\mathcal X$.
\end{prop}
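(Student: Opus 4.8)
The plan is to verify the three axioms of a partial order for $\prec$ on $\mathcal X$: reflexivity, transitivity, and antisymmetry. Reflexivity and transitivity will be immediate. For reflexivity, the identity map of $\supp m_X$ is $1$-Lipschitz and pushes $m_X$ to $m_X$, so $X\prec X$. For transitivity, if $Z\prec Y$ and $Y\prec X$ are witnessed by $1$-Lipschitz maps $f\colon\supp m_X\to\supp m_Y$ with $f_*m_X=m_Y$ and $g\colon\supp m_Y\to\supp m_Z$ with $g_*m_Y=m_Z$, then $g\circ f$ is $1$-Lipschitz and $(g\circ f)_*m_X=m_Z$, so $Z\prec X$. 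All the real content lies in antisymmetry: I must show that $X\prec Y$ and $Y\prec X$ imply that $X$ and $Y$ are mm-isomorphic.

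For antisymmetry I would fix $1$-Lipschitz maps $g\colon\supp m_Y\to\supp m_X$ with $g_*m_Y=m_X$ and $f\colon\supp m_X\to\supp m_Y$ with $f_*m_X=m_Y$, and consider the self-map $\phi:=f\circ g\colon\supp m_Y\to\supp m_Y$, which is $1$-Lipschitz with $\phi_*m_Y=m_Y$. The key lemma I would isolate is: \emph{every $1$-Lipschitz self-map $\phi$ of an mm-space with $\phi_*m=m$ is a bijective isometry of $\supp m$.} Granting this, $\phi$ bijective forces $g$ injective, and for all $y,y'\in\supp m_Y$ the chain $d_Y(y,y')=d_Y(\phi y,\phi y')\le d_X(gy,gy')\le d_Y(y,y')$ collapses, so $g$ is an isometric embedding; since $\supp m_Y$ is complete, $g(\supp m_Y)$ is closed in $\supp m_X$, and $g_*m_Y=m_X$ forces $m_X$ to be concentrated on it, whence $\supp m_X\subseteq g(\supp m_Y)$ and $g$ is onto. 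Thus $g$ is an mm-isomorphism, and $X=Y$ in $\mathcal X$.

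To prove the lemma — the main obstacle — I would first pass to an almost-everywhere statement and then upgrade it. Choose a bounded, strictly increasing, continuous $h\colon[0,\infty)\to[0,1)$, say $h(t)=t/(1+t)$. Since $\phi$ is $1$-Lipschitz, $F(x,y):=h(d(x,y))-h(d(\phi x,\phi y))\ge 0$ on $\supp m\times\supp m$; and since $(\phi\times\phi)_*(m\otimes m)=(\phi_*m)\otimes(\phi_*m)=m\otimes m$, the change-of-variables formula gives $\int h(d(\phi x,\phi y))\,d(m\otimes m)=\int h(d)\,d(m\otimes m)$, both finite because $h\le 1$; hence $\int F\,d(m\otimes m)=0$, so $d(\phi x,\phi y)=d(x,y)$ for $(m\otimes m)$-a.e. $(x,y)$ (here $m(\supp m)=1$ since the space is second countable, cf. the remark above). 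Next I would upgrade this to a genuine isometry: let $G=\{(x,y)\in\supp m\times\supp m:d(\phi x,\phi y)=d(x,y)\}$, a symmetric closed set of full $m\otimes m$-measure containing the diagonal; by Fubini the set $X_1$ of $x$ with $m(G_x)=1$ has full measure, and one builds inductively a sequence $(s_n)\subset X_1$ with $s_{n+1}\in X_1\cap\bigcap_{i\le n}G_{s_i}$, each such intersection having full measure hence being dense in $\supp m$, so that $(s_n)$ can be made dense via a diagonal enumeration over a fixed countable dense subset. Then $S=\{s_n\}$ satisfies $S\times S\subseteq G$, so $\phi|_S$ is an isometry, and by density and continuity $\phi$ is an isometry of $\supp m$ onto $\overline{\phi(\supp m)}$, which equals $\supp\phi_*m=\supp m$ by Proposition \ref{prop:push} (the image being closed since it is the isometric image of a complete space). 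The delicate points are exactly that mm-spaces here may have infinite diameter — which is why the bounded transform $h$ is needed rather than integrating $d$ directly — and the Fubini/density bookkeeping that converts the almost-everywhere isometry into an honest one on the whole support.
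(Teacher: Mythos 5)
Your proof is correct. The paper itself gives no argument for this proposition --- it is quoted directly as Proposition 2.11 of \cite{Shioya:mmg} --- so the comparison is really with the reference. Your reduction of antisymmetry to the key lemma (\emph{a $1$-Lipschitz self-map $\phi$ of $\supp m$ with $\phi_*m=m$ is a surjective isometry}) is exactly the structure of the cited proof, which isolates the same statement as a separate lemma. Where you genuinely diverge is in how you prove that lemma: the reference argues via measures of balls and a net/recurrence argument, whereas you integrate the nonnegative defect $h(d(x,y))-h(d(\phi x,\phi y))$ against $m\otimes m$, use invariance of $m\otimes m$ under $\phi\times\phi$ to conclude the defect vanishes almost everywhere, and then upgrade to a true isometry by the Fubini-plus-dense-sequence construction. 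Your route is self-contained and the two delicate points you flag (the bounded transform $h$ to handle infinite diameter, and the a.e.-to-everywhere upgrade, which needs that full-measure sets are dense in $\supp m$) are precisely the ones that need care; both are handled correctly. The final steps --- deducing from the bijectivity and isometry of $\phi=f\circ g$ that $g$ is an isometric embedding with closed image carrying all of $m_X$, hence an mm-isomorphism --- are also sound.
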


\begin{dfn}[Transport plan]\label{dfn:trans}
Let $\mu$ and $\nu$ be two Borel probability measures on two topological spaces $X$ and $Y$, respectively. We say that a Borel probability measure $\pi$ on $X\times Y$ is a transport plan between $\mu$ and $\nu$ if we have $(\pr_1)_*\pi=\mu$ and $(\pr_2)_*\pi=\nu$, where $\pr_1\colon X\times Y\to X$ and $\pr_2\colon X\times Y\to Y$ are the first and second projections, respectively. We denote by $\Pi(\mu,\nu)$ the set of transport plans between $\mu$ and $\nu$.
\end{dfn}
\begin{prop}\label{prop:suppPi}
Let $X$ and $Y$ be two mm-spaces. Then we have
\[
\supp\pi\subset (\supp m_X)\times (\supp m_Y)
\]
for any $\pi\in\Pi(m_X,m_Y)$.
\end{prop}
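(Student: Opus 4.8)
The plan is to verify the containment pointwise, using only the definition of a transport plan and the continuity of the coordinate projections. First I would fix a point $(x,y)\in\supp\pi$ and aim to show $x\in\supp m_X$ and $y\in\supp m_Y$ separately; by the symmetry of the two coordinates it suffices to treat the first one.

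For the first coordinate, let $U$ be an arbitrary open neighborhood of $x$ in $X$. Then $\pr_1^{-1}(U)=U\times Y$ is an open neighborhood of $(x,y)$ in $X\times Y$, so the definition of support gives $\pi(U\times Y)>0$. On the other hand, since $\pi\in\Pi(m_X,m_Y)$ we have $(\pr_1)_*\pi=m_X$, hence $\pi(U\times Y)=\pi(\pr_1^{-1}(U))=(\pr_1)_*\pi(U)=m_X(U)$. Therefore $m_X(U)>0$ for every open neighborhood $U$ of $x$, which is exactly the statement that $x\in\supp m_X$. The same argument with $\pr_2$ in place of $\pr_1$ shows $y\in\supp m_Y$, and hence $(x,y)\in(\supp m_X)\times(\supp m_Y)$, as desired.

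There is essentially no serious obstacle here: the argument only uses that $\pr_1^{-1}(U)$ is open (continuity of $\pr_1$) and that the push-forward of $\pi$ recovers $m_X$ on cylinder sets of this form. As an alternative route, one could instead invoke Proposition \ref{prop:push}: since $X\times Y$ is a complete separable metric space it is second countable, so $\pi(X\times Y\setminus\supp\pi)=0$, and applying Proposition \ref{prop:push} to the continuous map $\pr_1$ gives $\supp m_X=\supp(\pr_1)_*\pi=\overline{\pr_1(\supp\pi)}\supset\pr_1(\supp\pi)$, so $x\in\supp m_X$ whenever $(x,y)\in\supp\pi$; the analogous statement with $\pr_2$ completes the proof. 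I would present the direct argument as the main proof since it avoids the second-countability remark.
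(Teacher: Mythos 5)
Your proof is correct and is essentially the paper's argument: the paper proves this by citing Proposition \ref{prop:push} to get $\pr_1(\supp\pi)\subset\supp((\pr_1)_*\pi)=\supp m_X$, which is exactly the inclusion your direct argument unpacks (open $U\ni x$ gives $\pi(\pr_1^{-1}(U))=m_X(U)>0$). Your direct version has the mild advantage of not even invoking the hypothesis $\mu(X\times Y\setminus\supp\mu)=0$ appearing in Proposition \ref{prop:push} (though only the easy half of that proposition, which does not use it, is needed), and your alternative route is literally the paper's proof.
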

\begin{proof}
Take any $(x,y)\in\supp\pi$. By Proposition \ref{prop:push}, we have
\[
x\in \pr_1(\supp\pi)\subset \supp((\pr_1)_*\pi)=\supp m_X.
\]
Similarly, we have $y\in\supp m_X$.
\end{proof}

\begin{prop}[Lemma 7.1 in \cite{Nkj:optimal}]\label{prop:nonDeg}
Let $X$ and $Y$ be two mm-spaces and $\pi$ a transport plan between $m_X$ and $m_Y$. We assume that a map $f\colon \supp m_X\to \supp m_Y$ satisfies
\[
\supp\pi\subset \{(x,f(x))\mid x\in\supp m_X\}.
\]
Then we have $f_*m_X=m_Y$.
\end{prop}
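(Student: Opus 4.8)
The plan is to prove that $m_X\big(f^{-1}(A)\big)=m_Y(A)$ for every Borel set $A\subseteq Y$, where $f^{-1}(A):=\{x\in\supp m_X\mid f(x)\in A\}$. The idea is that all the mass of $\pi$ sits on the graph $\Gamma:=\{(x,f(x))\mid x\in\supp m_X\}$ of $f$, so reading off $\pi$ through the first coordinate or through the second coordinate must give the same number, and this is exactly the desired identity.

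First I would record that $\pi$ is concentrated on its support: $X\times Y$ is a separable metric space, hence second countable, so by the Remark following Proposition \ref{prop:push} we have $\pi\big((X\times Y)\setminus\supp\pi\big)=0$, and therefore $\pi(B)=\pi(B\cap\supp\pi)$ for every Borel $B\subseteq X\times Y$. Next comes the set-theoretic core: because $\supp\pi\subseteq\Gamma$, a point $(x,y)\in\supp\pi$ satisfies $y=f(x)$, so for any $A\subseteq Y$ it lies in $\pr_2^{-1}(A)=X\times A$ if and only if $f(x)\in A$, i.e. if and only if it lies in $\pr_1^{-1}\big(f^{-1}(A)\big)$; moreover $\pr_1|_{\supp\pi}$ is injective since $\Gamma$ is a graph. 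Hence $\supp\pi\cap\pr_2^{-1}(A)=\supp\pi\cap\pr_1^{-1}\big(f^{-1}(A)\big)$, and using $(\pr_2)_*\pi=m_Y$, $(\pr_1)_*\pi=m_X$ together with the concentration of $\pi$ on $\supp\pi$,
\[
m_Y(A)=\pi\big(\supp\pi\cap\pr_2^{-1}(A)\big)=\pi\big(\supp\pi\cap\pr_1^{-1}(f^{-1}(A))\big)=\pi\big(\pr_1^{-1}(f^{-1}(A))\big)=m_X\big(f^{-1}(A)\big),
\]
which is precisely $f_*m_X=m_Y$.

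The step that really needs care — and the main obstacle — is measurability: $f$ is only assumed to be ``a map'', so a priori $f^{-1}(A)$ need not be Borel and $f_*m_X$ need not even be defined, yet the last display uses that $\pr_1^{-1}(f^{-1}(A))$ is $\pi$-measurable. To deal with this I would exploit that $\supp\pi$ is a closed, hence Polish, subspace of $X\times Y$ and that $\pr_1|_{\supp\pi}$ is a continuous injection. By the Lusin--Souslin theorem its image $S:=\pr_1(\supp\pi)$ is Borel in $X$ and $\pr_1|_{\supp\pi}\colon\supp\pi\to S$ is a Borel isomorphism with Borel inverse $g$; then on $S$ the map $f$ coincides with the Borel map $\pr_2\circ g$ (by the graph property, $g(x)=(x,f(x))$), and $m_X(S)=(\pr_1)_*\pi(S)\ge\pi(\supp\pi)=1$ by the concentration step. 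Thus $f$ agrees $m_X$-almost everywhere with a Borel map, which makes $f_*m_X$ well defined and legitimizes the computation above; alternatively one may simply observe that $f^{-1}(A)=\pr_1\big(\supp\pi\cap\pr_2^{-1}(A)\big)$ is analytic, hence universally measurable, which is all that is used. Everything else is routine bookkeeping with pushforwards, and Propositions \ref{prop:push} and \ref{prop:suppPi} ensure the relevant supports behave as expected.
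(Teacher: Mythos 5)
Your proof is correct. Note that the paper itself gives no proof of this proposition --- it is imported as Lemma~7.1 of \cite{Nkj:optimal} --- so there is nothing in-text to compare against; but your argument is the natural one: concentrate $\pi$ on $\supp\pi$ (valid since $X\times Y$ is second countable), use the graph inclusion to identify $\supp\pi\cap\pr_2^{-1}(A)$ with $\supp\pi\cap\pr_1^{-1}(f^{-1}(A))$, and push forward through each coordinate. Your decision to confront the measurability of $f^{-1}(A)$ head-on is the right instinct, since the statement assumes only that $f$ is ``a map''; the Lusin--Souslin step (or the analytic-set fallback) settles it, and in the paper's actual application the map $f$ is $1$-Lipschitz, so this issue is vacuous there. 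One small imprecision: $\pr_1\bigl(\supp\pi\cap\pr_2^{-1}(A)\bigr)$ equals $f^{-1}(A)\cap\pr_1(\supp\pi)$ rather than $f^{-1}(A)$ itself, because $\pr_1(\supp\pi)$ is only dense in $\supp m_X$ (Proposition~\ref{prop:push} gives $\supp m_X=\overline{\pr_1(\supp\pi)}$); since you have already shown $m_X\bigl(\pr_1(\supp\pi)\bigr)=1$, the discrepancy is an $m_X$-null set and the conclusion is unaffected.
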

Let $(X,d_X)$ and $(Y,d_Y)$ be two metric spaces. We define 
\[
\dis S:=\sup\{|d_X(x,x')-d_Y(y,y')|: (x,y),(x',y')\in S\}
\]
for non-empty set $S\subset X\times Y$ and $\dis\emptyset:=0$.
\begin{prop}\label{prop:dis_le}
Let $(X,d_X)$ and $(Y,d_Y)$ be two metric spaces. Then we have
\[
\dis_{\succ} S\le \dis S
\]
for $S\subset X\times Y$, where $\dis_\succ S$ is in Definition \ref{dfn:lipE}.
\end{prop}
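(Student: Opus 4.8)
The plan is to read the inequality off directly from the two definitions, so the argument will be short and purely elementary. First I would dispose of the degenerate case: if $S=\emptyset$, then by the stated conventions $\dis_\succ S=0=\dis S$, so there is nothing to prove, and we may assume $S\neq\emptyset$. Note that this reduction, and everything that follows, applies verbatim to an arbitrary subset $S\subset X\times Y$, whether or not it is closed.

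The main step is the following trivial observation, carried out for a fixed but arbitrary pair $(x,y),(x',y')\in S$:
\[
d_Y(y,y')-d_X(x,x')\le |d_Y(y,y')-d_X(x,x')|=|d_X(x,x')-d_Y(y,y')|\le \dis S.
\]
Here the first inequality is just $a-b\le|a-b|$, and the last is the definition of $\dis S$. Since the resulting bound $d_Y(y,y')-d_X(x,x')\le\dis S$ is uniform over all pairs of points of $S$, I would then take the supremum of the left-hand side over all such pairs, which by definition of $\dis_\succ S$ gives $\dis_\succ S\le \dis S$, as claimed. (This works in $[0,\infty]$, so no finiteness hypothesis is needed.)

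I do not expect any genuine obstacle here: no topological, metric-completeness, or measure-theoretic input is required, only the two definitions and the elementary inequality $a-b\le|a-b|$. The only points demanding a little care are the convention for $\dis$ and $\dis_\succ$ on the empty set and the order of operations (establishing the uniform bound over pairs \emph{before} passing to the supremum), both of which are straightforward.
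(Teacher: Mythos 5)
Your proof is correct and is exactly the elementary argument the paper has in mind (the paper simply states ``the proof is easy and omitted''): the pointwise bound $d_Y(y,y')-d_X(x,x')\le|d_X(x,x')-d_Y(y,y')|\le\dis S$ followed by taking the supremum, with the empty-set convention handled separately. Nothing further is needed.
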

\begin{proof}
The proof is easy and omitted.
\end{proof}
\begin{dfn}[Box metric]\label{dfn:boxOptimal}
Let $X$ and $Y$ be two mm-spaces, then we have
\[
\square(X,Y)=\inf_{S\subset X\times Y}\inf_{\pi\in \Pi(m_X,m_Y)}\max\{\dis S, 1-\pi(S)\},
\]
where $S\subset X\times Y$ runs over all closed subsets of $X\times Y$.
\end{dfn}
\begin{rem}
Definition \ref{dfn:boxOptimal} is not the original definition of the box metric $\square$. The original definition is in Section $3\frac12.3$ in \cite{Gmv:green} (cf. Definition 4.4 in \cite{Shioya:mmg}). The equivalence between the original definition and Definition \ref{dfn:boxOptimal} is proved in Theorem 1.1 in \cite{Nkj:optimal}.
\end{rem}
\begin{dfn}[Prokhorov metric]
The Prokhorov metric $d_P$ is defined by
\[
d_P(\mu,\nu):=\inf\set{\varepsilon>0\mid \mu(U_\varepsilon(A))\ge \nu(A)-\varepsilon \text{ for any Borel set $A\subset X$}}
\]
for any two Borel probability measures $\mu$ and $\nu$ on a metric space $X$.
\end{dfn}
\begin{prop}[Proposition 4.12 in \cite{Shioya:mmg}]\label{prop:box_prok}
For any two Borel probability measures $\mu$ and $\nu$ on a complete separable metric space $X$, we have
\[
\square((X,\mu),(X,\nu))\leq 2d_P(\mu,\nu).
\]
\end{prop}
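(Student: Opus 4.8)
The plan is to reduce the statement to Strassen's theorem, which converts the Prokhorov estimate into a genuine coupling, and then to exhibit a single explicit closed set on which that coupling is almost entirely supported. Since the two mm-spaces in question share the same underlying metric space $X$, a coupling in the sense of Definition \ref{dfn:trans} is just a Borel probability measure on $X\times X$ with marginals $\mu$ and $\nu$, which fits Definition \ref{dfn:boxOptimal} directly.

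Concretely, I would fix an arbitrary $\varepsilon>d_P(\mu,\nu)$. By definition of $d_P$ we then have $\mu(U_\varepsilon(A))\ge\nu(A)-\varepsilon$ for every Borel set $A\subset X$, and by Strassen's theorem (using that this one-sided condition is equivalent to the usual two-sided Prokhorov condition with the same $\varepsilon$) there is a coupling $\pi\in\Pi(\mu,\nu)$ with
\[
\pi\bigl(\{(x,y)\in X\times X\mid d_X(x,y)\ge\varepsilon\}\bigr)\le\varepsilon .
\]
Now set $S:=\{(x,y)\in X\times X\mid d_X(x,y)\le\varepsilon\}$. The map $(x,y)\mapsto d_X(x,y)$ is continuous, so $S$ is closed, and the choice of $\pi$ gives $1-\pi(S)\le\varepsilon$. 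For any $(x,y),(x',y')\in S$ the quadrilateral inequality yields $|d_X(x,x')-d_X(y,y')|\le d_X(x,y)+d_X(x',y')\le2\varepsilon$, hence $\dis S\le2\varepsilon$. Feeding $S$ and $\pi$ into Definition \ref{dfn:boxOptimal} gives $\square((X,\mu),(X,\nu))\le\max\{\dis S,\,1-\pi(S)\}\le\max\{2\varepsilon,\varepsilon\}=2\varepsilon$, and letting $\varepsilon\downarrow d_P(\mu,\nu)$ finishes the argument.

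The only non-routine ingredient, and therefore the main point to be careful about, is the passage from the Prokhorov inequality to the coupling $\pi$ — that is, Strassen's theorem — together with the bookkeeping needed to match the specific form of $d_P$ used here (one-sided, with open $\varepsilon$-neighborhoods $U_\varepsilon(A)$) to the standard statement of Strassen's theorem, including the strict/non-strict inequalities and the infimum defining $d_P$. Once the coupling is in hand, the rest is the two-line estimate above.
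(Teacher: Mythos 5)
Your argument is correct. The paper itself offers no proof of this proposition --- it is imported verbatim as Proposition 4.12 of \cite{Shioya:mmg} --- so the only meaningful comparison is with the proof in that reference, which establishes the same estimate but works with the original parametrization definition of $\square$ (measure-preserving maps from the unit interval) and a parametrized form of the Strassen--Dudley theorem. Your version is the natural translation of that argument into the coupling formulation of Definition \ref{dfn:boxOptimal}: Strassen gives $\pi\in\Pi(\mu,\nu)$ with $\pi(\{d_X\ge\varepsilon\})\le\varepsilon$ for each $\varepsilon>d_P(\mu,\nu)$, the closed set $S=\{d_X\le\varepsilon\}$ has $1-\pi(S)\le\varepsilon$ and $\dis S\le 2\varepsilon$ by the quadrilateral inequality, and letting $\varepsilon$ decrease to $d_P(\mu,\nu)$ finishes. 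You are right that the one delicate point is matching the one-sided Prokhorov condition used here to the hypotheses of Strassen's theorem; this is a classical equivalence (the one-sided and symmetric versions of $d_P$ coincide, and completeness plus separability gives the tightness needed for the coupling to exist), so the proof is complete as written. Given that this paper adopts the optimal-transport characterization of $\square$ throughout, your route is arguably the more natural one in this context, at the cost of invoking Strassen as a black box rather than the interval-parametrization machinery of \cite{Shioya:mmg}.
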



Let us explain the weak Hausdorff convergence which is used in the proof of Theorem \ref{thm:Lip_main} \eqref{thm:Lip_main:eq2}.
\begin{dfn}[Weak Hausdorff convergence]\label{dfn:wH}
Let $(X,d_X)$ be a metric space. Let $A,A_n\subset X$, $n=1,2,\dots$, be closed subsets of $X$. We say that {\it $A_n$ converges weakly to $A$ as $n\to\infty$} if the following \eqref{dfn:wH1} and \eqref{dfn:wH2} are both satisfied.
\begin{enumerate}
\item For any $x\in A$, we have
\[
\lim_{n\to\infty}d_X(x,A_n)=0.
\]\label{dfn:wH1}
\item For any $x\in X\setminus A$, we have
\[
\liminf_{n\to\infty}d_X(x,A_n)>0.
\]\label{dfn:wH2}
\end{enumerate}
This convergence is called the {\it weak Hausdorff convergence} or the {\it Kuratowski-Painlev\'e convergence}. 
\end{dfn}

For the weak Hausdorff convergence, the following Theorem \ref{thm:KPcpt} is useful.
\begin{thm}[Theorem 5.2.12 in \cite{Beer:Top}]\label{thm:KPcpt}
Let $X$ be a second countable metric space. For any sequence of closed subsets of $X$, there exists a convergent subsequence in the weak Hausdorff sense.
\end{thm}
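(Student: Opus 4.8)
The plan is to combine a diagonal extraction over a countable base of $X$ with an explicit description of the limit set; this is the classical compactness argument for the hyperspace of closed subsets under Kuratowski-Painlev\'e convergence. Fix a countable base $(U_k)_{k=1}^{\infty}$ for the topology of $X$, which exists by second countability. Given a sequence $(A_n)_n$ of closed subsets of $X$, observe that for each fixed $k$ the indicator $\mathbf{1}[A_n\cap U_k\neq\emptyset]$ takes values in the finite set $\set{0,1}$; hence, by successively passing to infinite subsequences (one refinement for each $k$) and then diagonalizing, one obtains a single subsequence $(A_{n_j})_j$ along which, for every $k$, the sequence $j\mapsto \mathbf{1}[A_{n_j}\cap U_k\neq\emptyset]$ is eventually constant.

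With this subsequence fixed, I would define the candidate limit by first setting
\[
\mathcal L:=\set{k\mid A_{n_j}\cap U_k\neq\emptyset \text{ for all sufficiently large }j}
\]
and then putting $A:=X\setminus \bigcup_{k\notin\mathcal L} U_k$. The set $A$ is closed, being the complement of a union of open sets, and it is allowed to be empty (which is the correct outcome when, for instance, the sets $A_{n_j}$ leave every bounded region). It then remains to verify that $A_{n_j}$ converges weakly to $A$ in the sense of Definition \ref{dfn:wH}.

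For condition \eqref{dfn:wH2}, take $x\in X\setminus A$; then $x\in U_k$ for some $k\notin\mathcal L$, so the eventual constancy of the indicator forces $A_{n_j}\cap U_k=\emptyset$ for all large $j$. Choosing $r>0$ with $U_r(\{x\})\subset U_k$ then gives $d_X(x,A_{n_j})\ge r$ for all large $j$, hence $\liminf_{j\to\infty}d_X(x,A_{n_j})\ge r>0$. For condition \eqref{dfn:wH1}, take $x\in A$ and suppose toward a contradiction that $d_X(x,A_{n_j})$ does not tend to $0$; then there are $\delta>0$ and infinitely many $j$ with $d_X(x,A_{n_j})\ge\delta$. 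Pick a basic open set $U_k$ with $x\in U_k\subset U_\delta(\{x\})$; for those infinitely many $j$ we have $A_{n_j}\cap U_k=\emptyset$, and by eventual constancy this holds for all large $j$, so $k\notin\mathcal L$ and therefore $U_k\subset X\setminus A$, contradicting $x\in A\cap U_k$. Hence $\lim_{j\to\infty}d_X(x,A_{n_j})=0$, as required.

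The diagonalization and both verifications are routine. The only point needing a little care is that the eventual-constancy property is used in both directions: in the proof of \eqref{dfn:wH1} one must upgrade ``$A_{n_j}\cap U_k=\emptyset$ for infinitely many $j$'' to ``for all large $j$'', which is precisely what the diagonalization supplies, and one must always be able to squeeze a basic open neighbourhood of $x$ inside a prescribed ball about $x$, which is exactly what being a base provides. I do not anticipate any genuine obstacle beyond this bookkeeping.
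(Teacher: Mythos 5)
Your proof is correct: the diagonal extraction over a countable base, the definition of the limit $A$ as the complement of the union of the basic open sets that the subsequence eventually misses, and the two verifications against Definition \ref{dfn:wH} (including the degenerate case $A=\emptyset$, which works because $d_X(x,\emptyset)=\infty$) all go through. The paper itself gives no proof and simply cites Theorem 5.2.12 of \cite{Beer:Top}, whose argument is essentially the same classical one you reproduce, so there is nothing to fix.
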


\begin{lem}[Lemma 6.4 in \cite{NkjShioya:mm-gp}]\label{lem:KP_ProkLsConti}
Let $X$ be a complete separable metric space, $S, S_n\subset X$ closed subsets, and $\mu, \mu_n$
 Borel probability measures, $n=1,2,\dots$. If $\mu_n$ converges weakly to $\mu$ and if $S_n$ converges to $S$ in the weak Hausdorff, then
 \begin{equation}\label{lem:KP_ProkLsConti:eqMain}
 \mu(S)\ge\limsup_{n\to \infty}\mu_n(S_n).
 \end{equation}
 \end{lem}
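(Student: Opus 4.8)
The plan is to reduce the estimate to the Portmanteau theorem applied to the \emph{fixed} closed sets $B_\varepsilon(S)$ (the closed $\varepsilon$-neighborhoods of $S$), after first discarding a small amount of mass on a compact set furnished by Prokhorov's tightness theorem. First I would observe that it suffices to prove
\[
\limsup_{n\to\infty}\mu_n(S_n)\le \mu(B_\varepsilon(S))+\delta
\]
for every $\varepsilon>0$ and every $\delta>0$: since $S$ is closed, $B_\varepsilon(S)$ decreases to $S$ as $\varepsilon\downarrow 0$, so continuity from above of the finite measure $\mu$ gives $\mu(B_\varepsilon(S))\to\mu(S)$, and then letting $\delta\downarrow 0$ yields \eqref{lem:KP_ProkLsConti:eqMain}.

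Two ingredients go into this reduced estimate. First, because $\mu_n$ converges weakly to $\mu$ on the complete separable metric space $X$, Prokhorov's theorem shows that $\{\mu_n\}_n$ is tight, so there is a compact set $K\subset X$ with $\mu_n(X\setminus K)\le\delta$ for all $n$. Second -- and this is where Definition \ref{dfn:wH} \eqref{dfn:wH2} enters -- I claim that for each fixed $\varepsilon>0$ one has $S_n\cap K\subset B_\varepsilon(S)$ for all sufficiently large $n$. If this failed, there would be a subsequence $(n_k)_k$ and points $x_k\in S_{n_k}\cap K$ with $d_X(x_k,S)>\varepsilon$; by compactness of $K$ we may pass to a further subsequence with $x_k\to x\in K$, whence $d_X(x,S)\ge\varepsilon$, so $x\in X\setminus S$, while $d_X(x,S_{n_k})\le d_X(x,x_k)\to 0$ forces $\liminf_{n\to\infty}d_X(x,S_n)=0$, contradicting \eqref{dfn:wH2}. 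Combining the two: for all large $n$,
\[
\mu_n(S_n)\le\mu_n(S_n\cap K)+\mu_n(X\setminus K)\le\mu_n(B_\varepsilon(S))+\delta,
\]
and since $B_\varepsilon(S)$ is closed, the Portmanteau theorem gives $\limsup_{n\to\infty}\mu_n(B_\varepsilon(S))\le\mu(B_\varepsilon(S))$, which is exactly the reduced estimate.

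The only real obstacle will be the possible non-compactness of $X$: without it the localization step would be unnecessary, but in general the weak Hausdorff condition \eqref{dfn:wH2} is a pointwise statement that cannot be quantified uniformly over $X$, and the tightness of $\{\mu_n\}_n$ is precisely what allows one to confine attention to a compact $K$ on which the subsequence-extraction argument closes the gap. I note that condition \eqref{dfn:wH1} is not needed for this one-sided inequality, and the degenerate case $S=\emptyset$ needs no separate treatment, since then $B_\varepsilon(S)=\emptyset$ and the claim above merely asserts $S_n\cap K=\emptyset$ for large $n$.
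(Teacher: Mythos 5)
Your argument is correct, but it reaches \eqref{lem:KP_ProkLsConti:eqMain} by a genuinely different route than the paper. The paper fattens the \emph{moving} sets and measures them with the \emph{fixed limit} measure: from condition \eqref{dfn:wH2} of Definition \ref{dfn:wH} it gets the inclusion $S\supset\bigcap_{t>0}\bigcap_n\bigcup_{k\ge n}U_t(S_k)$, hence $\mu(S)\ge\lim_{t\to+0}\limsup_n\mu(U_t(S_n))$, and then compares $\mu_n(S_n)$ with $\mu(U_{\delta}(S_n))$ directly via the defining inequality of the Prokhorov metric (weak convergence giving $d_P(\mu_n,\mu)\to 0$ by separability). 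You instead confine the moving sets inside a \emph{fixed} closed neighborhood of the limit set: tightness of $\{\mu_n\}_n$ (the converse half of Prokhorov's theorem, which is where completeness is actually used in your version) supplies a compact $K$, a subsequence-extraction argument from \eqref{dfn:wH2} gives $S_n\cap K\subset B_\varepsilon(S)$ eventually, and the classical Portmanteau upper bound for the closed set $B_\varepsilon(S)$ finishes. Your version is slightly longer and leans on the deeper tightness theorem plus a compactness argument, but it reduces everything to standard named results and makes transparent why the weak Hausdorff hypothesis forces the $S_n$ into a neighborhood of $S$; the paper's version is shorter, avoids tightness entirely, and needs only that weak convergence is metrized by $d_P$ on separable spaces. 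Both proofs, as you correctly observe, use only condition \eqref{dfn:wH2} and handle $S=\emptyset$ without special care.
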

\begin{proof}
In general, we have
\[
\{x\in X\mid \liminf_{n\to\infty} d_X(x,S_n)=0\}=\bigcap_{t>0}\bigcap_{n\in\N}\bigcup_{k\ge n} U_t(S_k).
\]
Since $S_n$ converges to $S$ in the weak Hausdorff, we have
\[
S\supset \{x\in X\mid \liminf_{n\to\infty} d_X(x,S_n)=0\}
\]
and these imply
\begin{equation}\label{lem:KP_ProkLsConti:eq1}
\mu(S)\ge \lim_{t\to +0}\limsup_{n\to\infty}\mu\left(U_t(S_n)\right).
\end{equation}
Now, let us prove \eqref{lem:KP_ProkLsConti:eqMain}. Take any $\varepsilon>0$. By \eqref{lem:KP_ProkLsConti:eq1}, there exist $\delta_1>0$ and $N_1\in\N$ such that
\begin{equation}\label{lem:KP_ProkLsConti:eq2}
\mu(U_t(S_n))<\mu(S)+\varepsilon
\end{equation}
for any $t\in (0,\delta_1]$ and $n\ge N_1$.
Put $\delta_2:=\min\{\varepsilon,\delta_1\}$. Since $X$ is separable and $\mu_n$ converges to $\mu$ weakly, there exists $N_2$ such that
\begin{equation}\label{lem:KP_ProkLsConti:eq3}
d_P(\mu_n,\mu)<\delta_2
\end{equation}
for any $n\ge N_2$. Take any $n\ge \max\{N_1,N_2\}$.
By \eqref{lem:KP_ProkLsConti:eq2} and \eqref{lem:KP_ProkLsConti:eq3}, we obtain
\[
\mu(S)+\varepsilon > \mu(U_{\delta_2}(S_n))\ge\mu_n(S_n)-\varepsilon.
\]
This completes the proof.
\end{proof}
\begin{rem}
The proof of Lemma \ref{lem:KP_ProkLsConti} is more directly than the proof of Lemma 6.4 in \cite{NkjShioya:mm-gp}.
\end{rem}

\section{Lipschitz order with additive errors}\label{sec:main}
In this section, we give a proof of Theorem \ref{thm:Lip_main}.
\begin{proof}[Proof of Theorem \ref{thm:Lip_main} \eqref{thm:Lip_main:eq1}]
We assume that $X\succ Y$. Let us prove $X\succ_0 Y$. Since $X\succ Y$, there exists a $1$-Lipschitz map $f\colon \supp m_X\to \supp m_Y$ such that $f_*m_X=m_Y$. Put 
\[\pi:=(\id_{\supp m_X},f)_*m_X\in \Pi(m_X,m_Y)\]
and $S:=\supp \pi$. Then we have $\dis_{\succ} S=0$ and $1-\pi(S)=0$. This implies that $X\succ_0 Y$.

Conversely, we assume that $X\succ_0 Y$.  There exist $\pi\in \Pi(m_X,m_Y)$ and a closed set $S\subset X\times Y$ such that $\dis_\succ S=0$ and $1-\pi(S)=0$. Since $\pi(S)=1$, we have $\supp\pi\subset S$. This implies that $\dis_\succ(\supp\pi)=0$.

Now, there exists a unique map $f\colon \supp m_X\to \supp m_Y$ such that $(x,f(x))\in \supp\pi$ for any $x\in \supp m_X$. Let us prove the existence of $f$. Take any point $x\in \supp m_X$. By Proposition \ref{prop:push}, we have 
\[\supp m_X=\supp (\pr_1)_*\pi=\overline{(\pr_1)(\supp\pi)}.\]
Then there exists $((x_n,y_n))_{n\in\N}\in (\supp\pi)^\N$ such that $(x_n)_{n\in\N}$ converges to $x$, where $\N$ is the set of positive integers. Take any positive integers $m$ and $n$. The conditions $(x_n,y_n)\in\supp\pi$ and $\dis_\succ \supp\pi=0$ show that
\[
d_Y(y_m,y_n)\le d_X(x_m,x_n).
\]
Then the sequence $(y_n)_{n\in \N}$ is Cauchy because the sequence $(x_n)_{n\in\N}$ is Cauchy. By the completeness of $Y$, there exists $y\in Y$ such that $(y_n)_{n\in \N}$ converges to $y$. Since $\supp\pi\subset X\times Y$ is closed, we have $(x,y)\in \supp\pi$. By Proposition \ref{prop:suppPi}, we have $y\in \supp m_Y$. Therefore we put $f(x):=y$. Next, let us prove the uniqueness of $f$. Take any point $x\in \supp m_X$ and any points $y,y'\in \supp m_Y$ with $(x,y),(x,y')\in \supp\pi$. Since $\dis_\succ\supp\pi=0$,
\[
d_Y(y,y')\le d_X(x,x)=0.
\]

The map $f$ is $1$-Lipschitz because $\dis_\succ\supp\pi=0$. The uniqueness of $f$ implies that
\[
\supp\pi =\{(x,f(x))\mid x\in\supp m_X\}.
\]
This shows that $f_*m_X=m_Y$ by Proposition \ref{prop:nonDeg}. This completes the proof.
\end{proof}

\begin{lem}\label{lem:disSle}
Let $X$ and $Y$ be two metric spaces. Let $X\times Y$ be the product space equipped with the $l^1$-metric. Then we have
\[
\dis_\succ U_t(S)\le \dis_\succ S +2t
\]
for any $t>0$ and $S\subset X\times Y$.
\end{lem}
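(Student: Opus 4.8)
The plan is to estimate $\dis_\succ U_t(S)$ directly by taking two arbitrary points of $U_t(S)$ and approximating them by points of $S$. Let $(a,b),(a',b')\in U_t(S)$; by definition of $U_t$ with respect to the $l^1$-metric on $X\times Y$, there exist $(x,y),(x',y')\in S$ with $d_X(a,x)+d_Y(b,y)<t$ and $d_X(a',x')+d_Y(b',y')<t$. I would then write
\[
d_Y(b,b')-d_X(a,a')
\le \bigl(d_Y(y,y')+d_Y(b,y)+d_Y(b',y')\bigr)-\bigl(d_X(x,x')-d_X(a,x)-d_X(a',x')\bigr),
\]
using the triangle inequality in $Y$ to bound $d_Y(b,b')$ from above and in $X$ to bound $d_X(a,a')$ from below. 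The right-hand side equals $\bigl(d_Y(y,y')-d_X(x,x')\bigr)+\bigl(d_X(a,x)+d_Y(b,y)\bigr)+\bigl(d_X(a',x')+d_Y(b',y')\bigr)$, which is at most $\dis_\succ S+t+t=\dis_\succ S+2t$ since $(x,y),(x',y')\in S$. Taking the supremum over all admissible pairs in $U_t(S)$ gives $\dis_\succ U_t(S)\le\dis_\succ S+2t$.

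I should also dispatch the degenerate case: if $S=\emptyset$ then $U_t(S)=\emptyset$, so both sides are $0$ and the inequality holds trivially; if $S\neq\emptyset$ but $U_t(S)$ happens to contain only one point or is empty the convention $\dis_\succ\emptyset=0$ (and the obvious fact $\dis_\succ\{p\}=0$) makes the bound immediate. In the main case one may tacitly assume $U_t(S)$ has at least two points, otherwise there is nothing to prove.

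The argument is essentially a bookkeeping exercise with two triangle inequalities, so there is no genuine obstacle; the only point requiring a little care is getting the \emph{signs} right — one needs an upper bound for the $Y$-distance and a \emph{lower} bound for the $X$-distance, so the $l^1$ displacements $d_X(a,x), d_X(a',x')$ enter with a favorable sign in the second estimate, and it is exactly the $l^1$ (rather than, say, $l^\infty$) structure of the product metric that lets both the $X$- and $Y$-displacements be absorbed into the single budget $t$ at each of the two points. This yields the stated constant $2t$ and completes the proof.
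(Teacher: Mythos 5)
Your argument is correct and is essentially the same as the paper's: approximate the two points of $U_t(S)$ by points of $S$ within $l^1$-distance $t$, then apply the triangle inequality in $Y$ (upper bound) and in $X$ (lower bound) to absorb the displacements into the budget $2t$. You even handle the sign convention of $\dis_\succ$ (namely $d_Y-d_X$) more carefully than the paper's own write-up, which transposes the roles of $d_X$ and $d_Y$ in this step; the degenerate cases you mention are harmless and need no further comment.
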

\begin{proof}
Let $S\subset X\times Y$ and  $t>0$. Take any two points $(x_1,y_1),(x_2,y_2)\in U_t(S)$. It is sufficient to prove that
\[
d_X(x_1,x_2)-d_Y(y_1,y_2)\le \dis_\succ S+2t.
\]
By $(x_i,y_i)\in U_t(S)$, there exists $(x_i',y_i')\in S$ such that
\[
d_X(x_i,x_i')+d_Y(y_i,y_i')<t
\]
for $i=1,2$. Now we have
\begin{align*}
&d_X(x_1,x_2)-d_Y(y_1,y_2)\\
&\le d_X(x_1',x_2')-d_Y(y_1',y_2')+\sum_{i=1}^2\{d_X(x_i,x_i')+d_Y(y_i,y_i')\}\\
&<\dis_\succ S+2t.
\end{align*}
This completes the proof.
\end{proof}
\begin{lem}[cf. \cite{Nkj:optimal}]\label{lem:DisLsConti}
Let $X$ and $Y$ be two metric spaces. Let $X\times Y$ be the product space equipped with the $l^1$-metric $d_{X\times Y}$. Let $S,S_n\subset X\times Y$, $n=1,2,\dots,$ be closed subsets. If $S_n$ converges weakly to $S$ as $n\to \infty$, then we have
\[
\dis_\succ S\le \liminf_{n\to \infty} \dis_\succ S_n.
\]
\end{lem}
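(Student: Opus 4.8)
The plan is to estimate, for each fixed pair of points of $S$, the quantity $d_Y(y_1,y_2)-d_X(x_1,x_2)$ by $\dis_\succ S_n$ up to an error tending to $0$, and then pass to the supremum over all such pairs. First I would dispose of the trivial case $S=\emptyset$: here $\dis_\succ S=0$, while $\dis_\succ S_n\ge 0$ for every $n$ (for nonempty $S_n$, evaluate the defining supremum on a diagonal pair $(x,y)=(x',y')$; for $S_n=\emptyset$ use $\dis_\succ\emptyset=0$), so the asserted inequality is immediate.

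Now assume $S\ne\emptyset$ and fix $(x_1,y_1),(x_2,y_2)\in S$. By condition \eqref{dfn:wH1} of weak Hausdorff convergence, $d_{X\times Y}((x_i,y_i),S_n)\to 0$ as $n\to\infty$ for $i=1,2$; in particular $S_n\ne\emptyset$ and $d_{X\times Y}((x_i,y_i),S_n)<\infty$ for all large $n$, so I may choose $(x_i^n,y_i^n)\in S_n$ with $d_{X\times Y}((x_i,y_i),(x_i^n,y_i^n))$ at most $d_{X\times Y}((x_i,y_i),S_n)+1/n$, hence with $d_X(x_i,x_i^n)+d_Y(y_i,y_i^n)\to 0$ (using that $d_{X\times Y}$ is the $l^1$-metric). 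Applying the triangle inequality in $X$ and in $Y$ yields
\[
d_Y(y_1,y_2)-d_X(x_1,x_2)
\le\bigl(d_Y(y_1^n,y_2^n)-d_X(x_1^n,x_2^n)\bigr)+\sum_{i=1}^{2}\bigl(d_X(x_i,x_i^n)+d_Y(y_i,y_i^n)\bigr)
\le\dis_\succ S_n+\eta_n,
\]
where $\eta_n:=\sum_{i=1}^{2}\bigl(d_X(x_i,x_i^n)+d_Y(y_i,y_i^n)\bigr)\to 0$. Taking $\liminf_{n\to\infty}$ gives $d_Y(y_1,y_2)-d_X(x_1,x_2)\le\liminf_{n\to\infty}\dis_\succ S_n$, and then taking the supremum over all $(x_1,y_1),(x_2,y_2)\in S$ gives $\dis_\succ S\le\liminf_{n\to\infty}\dis_\succ S_n$.

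I do not expect a genuine obstacle here: condition \eqref{dfn:wH2} of weak Hausdorff convergence is never used, and the $l^1$-structure on $X\times Y$ enters only so that the two componentwise errors add up conveniently (any equivalent product metric would work up to a constant). The one mild technical point is the possible non-attainment of the infimum $d_{X\times Y}((x_i,y_i),S_n)$, which forces the use of approximate minimizers; this is harmless. Alternatively, one can bypass the choice of $(x_i^n,y_i^n)$ entirely and invoke Lemma \ref{lem:disSle}: setting $r_n:=\max_{i}d_{X\times Y}((x_i,y_i),S_n)\to 0$, one has $(x_1,y_1),(x_2,y_2)\in U_t(S_n)$ for every $t>r_n$, hence $d_Y(y_1,y_2)-d_X(x_1,x_2)\le\dis_\succ U_t(S_n)\le\dis_\succ S_n+2t$; letting $t\downarrow r_n$ and then $n\to\infty$ gives the same conclusion.
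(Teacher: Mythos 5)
Your proof is correct, and it takes a more direct route than the paper's. The paper argues at the level of sets: from condition \eqref{dfn:wH1} it deduces the inclusion $S\subset\bigcap_{\varepsilon>0}\bigcup_{n}\bigcap_{k\ge n}U_\varepsilon(S_k)$ and then runs a chain of inequalities for $\dis_\succ$ applied to these intersections and unions (using monotonicity of $\dis_\succ$ under inclusion, the identity $\dis_\succ$ of an increasing union equals the supremum of the $\dis_\succ$'s, and Lemma \ref{lem:disSle} to pass from $U_\varepsilon(S_k)$ to $S_k$). You instead fix a single pair of points of $S$, pick approximate nearest points in $S_n$, and use the triangle inequality componentwise before taking $\liminf$ and then the supremum over pairs. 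The two arguments rest on the same idea --- points of $S$ are approximated by points of $S_n$ and the defect in $\dis_\succ$ is controlled by the approximation error, with the $l^1$-metric making the two componentwise errors add --- but your pointwise version avoids the slightly delicate bookkeeping with $\dis_\succ$ of countable unions and intersections, at the cost of handling approximate minimizers by hand; your closing alternative via Lemma \ref{lem:disSle} is essentially a pointwise rendering of the paper's argument. Your observations that condition \eqref{dfn:wH2} is never needed and that the empty cases are harmless both match the paper. One cosmetic remark: you consistently use $d_Y-d_X$ as in Definition \ref{dfn:lipE}, which is the correct orientation for $\dis_\succ$ (the paper's own proof of Lemma \ref{lem:disSle} writes the difference the other way around, but that does not affect your argument).
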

\begin{proof}
Since $S_n$ converges weakly to $S$ as $n\to \infty$, we have
\begin{align*}
S&\subset \{(x,y)\in X\times Y\mid \lim_{n\to \infty} d_{X\times Y}((x,y),S_n)=0\}\\
&=\bigcap_{\varepsilon >0}\bigcup_{n}\bigcap_{k\ge n}U_\varepsilon (S_k).
\end{align*}
By Lemma \ref{lem:disSle}, we have
\begin{align*}
\dis_\succ S\le & \dis_\succ \left(\bigcap_{\varepsilon >0}\bigcup_{n}\bigcap_{k\ge n}U_\varepsilon (S_k)\right)\\
&\le \lim_{\varepsilon\to +0}\dis_\succ \left(\bigcup_{n}\bigcap_{k\ge n}U_\varepsilon (S_k)\right)\\
&= \lim_{\varepsilon\to +0}\sup_n\dis_\succ \left(\bigcap_{k\ge n}U_\varepsilon (S_k)\right)\\
&\le \lim_{\varepsilon\to +0}\sup_n\inf_{k\ge n}\dis_\succ \left(U_\varepsilon (S_k)\right)\\
&\le \lim_{\varepsilon\to +0}\liminf_{n\to \infty}(\dis_\succ S_n+2\varepsilon)\\
&=\liminf_{n\to \infty}\dis_\succ S_n.
\end{align*}
\end{proof}

\begin{proof}[Proof of Theorem \ref{thm:Lip_main} \eqref{thm:Lip_main:eq2}]
Take any positive integer $n$ and assume that $X\succ_{t+\frac 1n}Y$. By Definition \ref{dfn:lipE}, there exist $\pi_n\in \Pi(m_X,m_Y)$ and a closed set $S_n\subset X\times Y$ such that $\dis_\succ S_n\le t+\frac 1n$ and $1-\pi_n(S_n)\le t+\frac 1n$.
By Theorem \ref{thm:KPcpt}, there exists a closed set $S\subset X\times Y$ such that $S_n$ weakly converges to $S$ by taking a subsequence. Since $\Pi(m_X,m_Y)$ is compact with respect to the weak convergence, we may assume that there exists $\pi\in\Pi(m_X,m_Y)$ such that $\pi_n$ weakly converges to $\pi$. By Lemma \ref{lem:DisLsConti}, we have
\[
\dis_\succ S\le \liminf_{n\to\infty}\dis_\succ S_n\le\liminf_{n\to\infty}\, (t+1/n)=t.
\]
We also have
\[
\pi(S)\ge \limsup_{n\to \infty}\pi_n(S_n)\ge 1-t
\]
by Lemma \ref{lem:KP_ProkLsConti}. Then we have $X\succ_t Y$. This completes the proof.
\end{proof}

To prove Theorem \ref{thm:Lip_main} \eqref{thm:Lip_main:eq3}, we prepare some definitions. For finitely many sets $X_i$, $i=1,2,\dots,n$, and for numbers $i_1,i_2,\dots,i_k\in \{1,\dots,n\}$, let $\pr_{i_1i_2\dots i_k}\colon \prod_{i=1}^n X_i\to \prod_{j=1}^k X_{i_j}$ be the projection defined by 
\[
\pr_{i_1i_2\dots i_k}(x_1,x_2,\dots ,x_n):=(x_{i_1},x_{i_2},\dots,x_{i_k})\text{ for $x_i\in X_i,\ i=1,2,\dots,n.$}
\]
Let $X,Y,Z$ be three mm-spaces. For subsets $S\subset X\times Y$ and $T\subset Y\times Z$, we define $T\bullet S:=(X\times T)\cap (S\times Z)$, $T\circ S:=\pr_{13}(T\bullet S)$. For $\sigma\in\Pi(m_X,m_Y)$ and $\tau\in \Pi(m_Y,m_Z)$, the \textit{gluing} $\tau\bullet \sigma$ is defined by
\[
\tau\bullet \sigma(A\times B\times C):=\int_B \sigma_y(A)\tau_y(C) dm_Y(y),
\]
where $\{\sigma_y\}_{y\in Y}$ and $\{\tau_y\}_{y\in Y}$ are the disintegrations respectively of $\sigma$ and $\tau$ for the projections $\pr_2\colon X\times Y\to Y$ and $\pr_1\colon Y\times Z\to Y$. Note that the gluing $\tau\bullet\sigma$ is a Borel probability measure on $X\times Y\times Z$. It holds that $(\pr_{12})_*(\tau\bullet \sigma)=\sigma$ and $(\pr_{23})_*(\tau\bullet \sigma)=\tau$. We also define $\tau\circ \sigma:=(\pr_{13})_*(\tau\bullet\sigma)\in \Pi(m_X,m_Z)$.
\begin{proof}[Proof of Theorem \ref{thm:Lip_main} \eqref{thm:Lip_main:eq3}]
Take $X_i\in \mathcal X$ for $i=1,2,3$ and assume that $X_i\succ_{t_i} X_{i+1}$ for $i=1,2$ for non-negative real numbers $t_1$ and $t_2$. By Definition \ref{dfn:lipE}, there exist $\pi_i\in \Pi(m_{X_i},m_{X_{i+1}})$ and $S_i\subset X_i\times X_{i+1}$ such that $\dis_\succ S_i\le t_i$ and $1-\pi_i(S_i)\le t_i$ for $i=1,2$. Put 
\[
S:=\overline{S_2\circ S_1}\subset X_1\times X_3,\quad \pi:=\pi_2\circ \pi_1\in\Pi(m_{X_1},m_{X_3}).
\]
Now we have
\begin{align*}
\pi(S)&=(\pr_{13})_*(\pi_2\bullet \pi_1)(\overline{\pr_{13}(S_2\bullet S_1)})\\
&\ge \pi_2\bullet\pi_1(S_2\bullet S_1)=1-\pi_2\bullet \pi_1((X_1\times S_2)^c\cup (S_1\times X_3)^c)\\
&\ge 1-(1-\pi_2(S_2))-(1-\pi_1(S_1))=\pi_1(S_1)+\pi_2(S_2)-1\\
&\ge(1-t_1)+(1-t_2)-1=1-t_1-t_2.
\end{align*}
Let us prove that $\dis_\succ S \le t_1+t_2$. It is sufficient to prove $\dis_\succ (S_2\circ S_1)\le t_1+t_2$ because $\dis_\succ S=\dis_\succ (S_2\circ S_1)$. Take any two points $(x_1,x_3), (x_1',x_3')\in S_2\circ S_1$. There exist $x_2,x_2'\in X_2$ such that
\[
(x_1,x_2,x_3), (x_1',x_2',x_3')\in (X_1\times S_2)\cap (S_1\times X_3).
\]
We have
\begin{align*}
&\quad d_{X_3}(x_3,x_3')-d_{X_1}(x_1,x_1')\\&=(d_{X_3}(x_3,x_3')-d_{X_2}(x_2,x_2'))+(d_{X_2}(x_2,x_2')-d_{X_1}(x_1,x_1'))\\
&\le \dis_\succ S_2+\dis_\succ S_1\le t_1+t_2.
\end{align*}
This implies $X_1\succ_{t_1+t_2} X_3$. This completes the proof.
\end{proof}
\section{$1$-Lipschitz continuity up to an additive error}\label{sec:upto}
In this section, we discuss the relation between the $1$-Lipschitz continuity up to an additive error $\varepsilon$ and  $X\succ_\varepsilon Y$. 
\begin{dfn}[$3\frac 12.15$ in \cite{Gmv:green}, cf. Definition 4.38 in \cite{Shioya:mmg}]\label{dfn:upto}
Let $X$ and $Y$ be two mm-spaces. A map $f\colon \supp m_X\to \supp m_Y$ is said to be \textit{$1$-Lipschitz up to (an additive error) $\varepsilon$} if there exists a Borel subset $X_0$ of $X$ with $X_0\subset \supp m_X$ such that
\begin{enumerate}
\item $m_X(X_0)\ge 1-\varepsilon$,
\item $f^*d_Y\le d_X+\varepsilon$ on $X_0\times X_0$,
\end{enumerate}
where we define
\[
f^*d_Y(x,x'):=d_Y(f(x),f(x'))\quad\text{for $x,x'\in \supp m_X$}.
\]
\end{dfn}
\begin{dfn}[Definition 3.3 in \cite{KY:bdd1}]\label{dfn:KY}
For any mm-spaces $X$, $Y$, and any $\varepsilon\ge 0$, we write $X\succ^{KY}_\varepsilon Y$ if there exists a Borel measurable $1$-Lipschitz up to $\varepsilon$ map $f\colon \supp m_X\to \supp m_Y$ such that $d_P(f_*m_X,m_Y)\le \varepsilon$. 
\end{dfn}

The following Theorem \ref{thm:LipEqUpto} gives the relation between Definition \ref{dfn:lipE} in this paper and Definition 3.3 in \cite{KY:bdd1}.

\begin{thm}\label{thm:LipEqUpto}
Let $X$ and $Y$ be two mm-spaces and $\varepsilon$ non-negative real number. Then we have the following $(1)$ and $(2)$.
\begin{enumerate}
\item If $X\succ_\varepsilon^{KY} Y$, then we have $X\succ_{3\varepsilon} Y$.\label{thm:LipEqUpto:eq1}
\item If $X\succ_\varepsilon Y$, then for any real number $\varepsilon'>\varepsilon$, we have $X\succ_{2\varepsilon'}^{KY} Y$.\label{thm:LipEqUpto:eq2}
\end{enumerate}
\end{thm}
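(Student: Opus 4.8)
The plan is to translate between the "coupling plus small-distortion set" formulation of Definition \ref{dfn:lipE} and the "almost-Lipschitz map plus Prokhorov estimate" formulation of Definition \ref{dfn:KY}, paying for each translation by a constant factor in the error.

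For part \eqref{thm:LipEqUpto:eq1}, I would start from a Borel $1$-Lipschitz-up-to-$\varepsilon$ map $f\colon \supp m_X\to\supp m_Y$ with $d_P(f_*m_X,m_Y)\le\varepsilon$, together with the Borel set $X_0\subset\supp m_X$ from Definition \ref{dfn:upto} satisfying $m_X(X_0)\ge 1-\varepsilon$ and $f^*d_Y\le d_X+\varepsilon$ on $X_0\times X_0$. First form the measure $\mu:=(\id,f)_*m_X\in\Pi(m_X,f_*m_X)$. Because $d_P(f_*m_X,m_Y)\le\varepsilon$ (really $<\varepsilon'$ for any $\varepsilon'>\varepsilon$, then let $\varepsilon'\downarrow\varepsilon$, or use that $d_P$ is attained/approximated), there is a coupling $\nu\in\Pi(f_*m_X,m_Y)$ concentrated up to mass $\varepsilon$ on the set $\{(y,y')\mid d_Y(y,y')\le\varepsilon\}$ — this is the standard characterization of the Prokhorov distance by couplings (Strassen-type). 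Glue: set $\pi:=\nu\circ\mu\in\Pi(m_X,m_Y)$, obtained via the gluing construction recalled just before the proof of Theorem \ref{thm:Lip_main}\eqref{thm:Lip_main:eq3}. Now take $S$ to be (the closure of) the set of $(x,y)$ arising from triples $(x,f(x),y)$ with $x\in X_0$ and $d_Y(f(x),y)\le\varepsilon$. One checks $\pi(S)\ge 1-2\varepsilon$ by a union bound over the bad events $\{x\notin X_0\}$ and $\{d_Y(f(x),y)>\varepsilon\}$, and that $\dis_\succ S\le\varepsilon+2\varepsilon=3\varepsilon$: for $(x,y),(x',y')\in S$, write $d_Y(y,y')-d_X(x,x')\le \big(d_Y(y,y')-d_Y(f(x),f(x'))\big)+\big(d_Y(f(x),f(x'))-d_X(x,x')\big)\le 2\varepsilon+\varepsilon$. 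Since $\pi(S)\ge 1-2\varepsilon\ge 1-3\varepsilon$ and $\dis_\succ S\le 3\varepsilon$, this gives $X\succ_{3\varepsilon}Y$. A minor technical point is that $S$ as described need not be closed, so I would close it up and note, via Lemma \ref{lem:disSle} applied to a tiny enlargement (or directly, since $\dis_\succ$ is lower semicontinuous under enlargement only up to $2t$), that the closure still has $\dis_\succ\le 3\varepsilon$; alternatively one argues with $\varepsilon'>\varepsilon$ throughout and then invokes Theorem \ref{thm:Lip_main}\eqref{thm:Lip_main:eq2} to pass to $\varepsilon$. I will adopt the latter, cleaner route.

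For part \eqref{thm:LipEqUpto:eq2}, suppose $X\succ_\varepsilon Y$, so there are $\pi\in\Pi(m_X,m_Y)$ and a closed $S\subset X\times Y$ with $\dis_\succ S\le\varepsilon$ and $\pi(S)\ge 1-\varepsilon$. Fix $\varepsilon'>\varepsilon$. The idea is to manufacture a genuine map out of $\pi$. Let $\{\pi_x\}$ be the disintegration of $\pi$ over the first coordinate; for each $x$ choose (measurably, using a measurable selection theorem) a point $f(x)\in\supp m_Y$ with $(x,f(x))$ close to $\supp(\pi_x)$ — more precisely, one can first discard a set of $x$ of $m_X$-measure $<\varepsilon'-\varepsilon$ outside of which $\pi_x(S_x)$ is close to $1$, then select $f(x)$ with $(x,f(x))\in S$ on a set $X_0$ of measure $\ge 1-\varepsilon'$ (this uses that $S$ is closed and a Kuratowski–Ryll-Nardzewski type selection, or a direct construction via a countable dense set and the closedness of $S$, as in the proof of Theorem \ref{thm:Lip_main}\eqref{thm:Lip_main:eq1}). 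On $X_0\times X_0$ we then have $(x,f(x)),(x',f(x'))\in S$, hence $d_Y(f(x),f(x'))-d_X(x,x')\le\dis_\succ S\le\varepsilon\le\varepsilon'$, so $f$ is $1$-Lipschitz up to $\varepsilon'$. Finally I must bound $d_P(f_*m_X,m_Y)$: since $(\id,f)_*m_X$ and $\pi$ agree "on $X_0$" and both project to $m_X$ in the first coordinate, one shows $(\id,f)_*m_X$ and $\pi$ are $\varepsilon'$-close in the Prokhorov sense on $X\times Y$ (their restrictions to $X_0\times Y$ coincide after pushing by $\pi(S\cap(X_0\times Y))$-considerations), whence $d_P(f_*m_X,m_Y)\le d_P(f_*m_X,(\pr_2)_*\pi)+0$; combining the mass-$\varepsilon'$ discrepancy with the distortion on $S$ gives $d_P(f_*m_X,m_Y)\le 2\varepsilon'$. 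Thus $X\succ^{KY}_{2\varepsilon'}Y$.

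The main obstacle is the measurable-selection step in part \eqref{thm:LipEqUpto:eq2}: producing a Borel measurable $f$ with $(x,f(x))\in S$ for $m_X$-almost all $x$ in a large set, starting only from the closed set $S$ and the coupling $\pi$. I expect to handle it exactly as in the proof of Theorem \ref{thm:Lip_main}\eqref{thm:Lip_main:eq1} when $\dis_\succ=0$ forces a single-valued $f$, but here $\dis_\succ S\le\varepsilon>0$ makes the fiber $S_x$ genuinely multivalued, so a selection theorem (Kuratowski–Ryll-Nardzewski, valid since $\supp m_Y$ is Polish and $x\mapsto S_x$ is a closed-valued measurable multifunction) is needed rather than a limiting argument. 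The second delicate point is bookkeeping the two independent sources of error — the mass defect $\pi(S^c)\le\varepsilon$ and the distortion $\varepsilon$ on $S$ — so that the Prokhorov estimate comes out as $2\varepsilon'$ and not worse; I will organize this via the coupling characterization of $d_P$ so that a single union bound suffices.
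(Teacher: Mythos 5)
Your part (1) is correct and lands on the same constant, but by a different route: you build the coupling by hand, gluing $(\id,f)_*m_X$ with a Strassen coupling realizing $d_P(f_*m_X,m_Y)\le\varepsilon$, and estimate $\dis_\succ$ of the resulting set directly as $2\varepsilon+\varepsilon$. The paper instead observes that $f$ and $X_0$ already witness $(X,m_X)\succ_\varepsilon(Y,f_*m_X)$, then uses $\square((Y,f_*m_X),(Y,m_Y))\le 2d_P(f_*m_X,m_Y)\le 2\varepsilon$ together with Proposition \ref{prop:boxToLip} to get $(Y,f_*m_X)\succ_{2\varepsilon}(Y,m_Y)$, and concludes by transitivity (Theorem \ref{thm:Lip_main}\eqref{thm:Lip_main:eq3}). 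The two arguments are essentially equivalent; the paper's version outsources the Strassen-type step to Proposition \ref{prop:box_prok} and avoids constructing the glued coupling explicitly. Your worry about closing up $S$ is unnecessary, since $\dis_\succ\overline{S}=\dis_\succ S$ by continuity of the distance functions, but your fallback via Theorem \ref{thm:Lip_main}\eqref{thm:Lip_main:eq2} is also fine.

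In part (2) there is a genuine gap at the selection step. You propose to extract $f$ with $(x,f(x))\in S$ via Kuratowski--Ryll-Nardzewski applied to the closed-valued multifunction $x\mapsto S_x$. But weak measurability of that multifunction requires $\{x\mid S_x\cap U\neq\emptyset\}=\pr_1(S\cap(X\times U))$ to be Borel for open $U$, and projections of closed (even of Borel) subsets of a Polish product are in general only analytic; likewise the domain $\pr_1(S)$ on which you want to define $f$ need not be Borel. So KRN does not apply as stated; you would need either the Jankov--von Neumann theorem (yielding only a universally measurable selection, which then has to be reconciled with Definition \ref{dfn:KY}'s requirement of Borel measurability) or an inner-regularity argument replacing $S$ by a compact subset carrying almost all of $\pi|_S$, for which $\pr_1$ of the set is compact and the multifunction is upper semicontinuous. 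The paper avoids all of this: it first replaces $X$ by a finite $t$-net $\dot X=(\mathcal N,d_X,(\pi_{\mathcal N})_*m_X)$ with $\square(X,\dot X)<2t$, transfers the relation to $\dot X\succ_{\varepsilon+2t}Y$ by transitivity, and then the selection $\dot f$ on the finite set $\mathcal N$ is trivially measurable; the final map is $f=\dot f\circ\pi_{\mathcal N}$, and the budget $\varepsilon'=\varepsilon+4t$ absorbs the discretization errors. Separately, your justification of $d_P(f_*m_X,m_Y)\le 2\varepsilon'$ via ``the restrictions of $(\id,f)_*m_X$ and $\pi$ to $X_0\times Y$ coincide'' is not correct as stated --- $\pi$ is spread over the fibers $S_x$ while $(\id,f)_*m_X$ sits on the graph; what is true is that for $(x,y)\in S$ with $x\in X_0$ one has $d_Y(f(x),y)\le\dis_\succ S\le\varepsilon$, and the inclusion $S\cap(X_0\times A)\subset f^{-1}(B_\varepsilon(A))\times Y$ plus a union bound over $\pi(S^c)$ and the mass outside $X_0$ yields the $2\varepsilon'$ estimate, exactly as in the paper's chain of inequalities for $\dot f_*m_{\dot X}(B_{\varepsilon+2t}(A))$.
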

\begin{proof}
Let us prove \eqref{thm:LipEqUpto:eq1}. By the assumption of \eqref{thm:LipEqUpto:eq1}, there exists a Borel measurable $1$-Lipschitz up to $\varepsilon$ map $f\colon \supp m_X\to \supp m_Y$ such that $d_P(f_*m_X,m_Y)$ $\le \varepsilon$.
Since $f$ is $1$-Lipschitz up to $\varepsilon$, there exists a Borel set $X_0$ such that $m_X(X_0)\ge 1-\varepsilon$ and $d_Y(f(x),f(x'))\le d_X(x,x')+\varepsilon$ for any $x,x'\in X_0$. Put $\pi:=(\id_X,f)_*m_X\in \Pi(m_X,f_*m_X)$ and $S:=\overline{(\id_X,f)(X_0)}\in X\times Y$. Then we have $\pi(S)\ge m_X(X_0)\ge 1-\varepsilon$ and $\dis_\succ S\le \varepsilon$. These imply $(X,m_X)\succ_\varepsilon (Y,f_*m_X)$. Since $\square((Y,f_*m_X),(Y,m_Y))\le 2d_P(f_*m_X,m_Y)\le 2\varepsilon$, we have $(Y,f_*m_X)\succ_{2\varepsilon} (Y,m_Y)$ because of Proposition \ref{prop:boxToLip}. Then we have $(X,m_X)\succ_{3\varepsilon} (Y,m_Y)$ by Theorem \ref{thm:Lip_main} \eqref{thm:Lip_main:eq3}. This completes the proof of \eqref{thm:LipEqUpto:eq1}.

Next let us prove \eqref{thm:LipEqUpto:eq2}. Take any real number $\varepsilon'$ with $\varepsilon'>\varepsilon$. Put $t:=(\varepsilon'-\varepsilon)/4>0$. By Lemma 4.18 in \cite{Shioya:mmg}, there exists a finite net $\mathcal N\subset X$ such that $m_X(B_{t/2}(\mathcal N))\ge 1-t/2$. By Lemma 3.4 in \cite{Shioya:mmg}, there exists a Borel measurable nearest projection $\pi_{\mathcal N}\colon \supp m_X\to\mathcal N$. We put
\[
\dot X:=(\mathcal N, d_X, (\pi_{\mathcal N})_*m_X).
\]
By Lemma 4.19 in \cite{Shioya:mmg}, we have
\[
d_P((\pi_{\mathcal N})_*m_X,m_X)\le t/2,
\]
this implies that 
\[
\square(X,\dot X)\le 2d_P(m_X,(\pi_{\mathcal N})_*m_X)\le t<2t.
\]
Since  we have $X\succ_\varepsilon Y$ and $\square(X,\dot X)<2t$, we obtain $\dot X\succ_{\varepsilon +2t} Y$ by Theorem \ref{thm:Lip_main} \eqref{thm:Lip_main:eq3}.
By $\dot X\succ_{\varepsilon+2t} Y$, there exists a Borel set $S\subset \mathcal N\times Y$ and $\pi\in \Pi((\pi_{\mathcal N})_*m_X,m_Y)$ such that $\max\{1-\pi(S),\dis_{\succ} S\}\le \varepsilon +2t$. We may assume that $S\subset \mathcal N\times \supp m_Y$ by Proposition \ref{prop:suppPi}. Put $\dot X_0:= \pr_1(S)$. 
Then we have
\begin{align*}
m_{\dot X}(\dot X_0)&=(\pi_{\mathcal N})_*m_X(\pr_1(S))=(\pr_1)_*\pi(\pr_1(S))\\
&\ge \pi(S)\ge 1-\varepsilon-2t.
\end{align*}
By the definition of $\dot X_0$, there exists a map $\dot f\colon \dot X\to \supp m_Y$ such that $(x,\dot f(x))\in S$ for any $x\in \dot X_0$. Then we have 
\begin{equation}\label{thm:LipEqUpto:eq5}
\dot f^*d_Y\le d_X+\varepsilon+2t \quad\text{on $\dot X_0\times \dot X_0$}
\end{equation}
because $\dis_\succ S\le \varepsilon +2t$. We remark that the map $\dot f$ is measurable because $\dot X$ is finite.

Let us prove
\begin{equation}\label{thm:LipEqUpto:eq3}
d_P(\dot f_*m_{\dot X},m_Y)\le 2\varepsilon + 4t.
\end{equation}
Take any Borel set $A\subset Y$. We have
\begin{equation}\label{thm:LipEqUpto:eq4}
S\cap (\dot X_0\times A) \subset \dot f^{-1}(B_{\varepsilon+2t}(A))\times Y
\end{equation}
because we have
\[
d_Y(\dot f(x),y)\le d_X(x,x)+\dis_\succ S\le \varepsilon +2t
\]
for any $(x,y)\in S\cap (\dot X_0\times A)$. By \eqref{thm:LipEqUpto:eq4}, we have
\begin{align*}
\dot f_*m_{\dot X}(B_{\varepsilon+2t}(A))&=(\pr_1)_*\pi(\dot f^{-1}(B_{\varepsilon+2t}(A)))\\
&=\pi(\dot f^{-1}(B_{\varepsilon +2t}(A))\times Y)\\
&\ge\pi((\dot X_0\times A)\cap S)\\
&\ge \pi(\dot X_0\times A)-\varepsilon -2t\\
&=\pi((\dot X\times A)\cap(\dot X_0\times Y))-\varepsilon -2t\\
&\ge \pi(\dot X\times A)+\pi(\dot X_0\times Y)-1-\varepsilon -2t\\
&\ge \pi(\dot X\times A)-2\varepsilon -4t\\
&=m_Y(A)-2\varepsilon - 4t.
\end{align*}
This implies that \eqref{thm:LipEqUpto:eq3}.
Now, we define $f\colon \supp m_X\to \supp m_Y$ by $f:=\dot f\circ\pi_{\mathcal N}$. Let us prove that $f$ is $1$-Lipschitz up to $\varepsilon'$. Put $X_0:=\pi_{\mathcal N}^{-1}(\dot X_0)\cap B_t(\mathcal N)$.

Since $m_X(B_{t/2}(\mathcal N))\ge 1-t/2$, we have
\begin{equation}\label{thm:LipEqUpto:eq6}
m_X(X_0)\ge (\pi_{\mathcal N})_*m_X(\dot X_0)-t =m_{\dot X}(\dot X_0)-t\ge 1-\varepsilon -3t\ge 1-\varepsilon'.
\end{equation}
By \eqref{thm:LipEqUpto:eq5}, we have
\begin{equation}\label{thm:LipEqUpto:eq7}
\begin{split}
d_Y(f(x),f(x'))&=\dot f^*d_Y(\pi_{\mathcal N}(x),\pi_{\mathcal N}(x'))\\
&=d_X(\pi_{\mathcal N}(x),\pi_{\mathcal N}(x'))+\varepsilon +2t\\
&\le d_X(x,x')+d_X(x,\pi_{\mathcal N}(x))+d_X(x',\pi_{\mathcal N}(x'))\\
&\quad +\varepsilon+2t\\
&\le d_X(x,x')+\varepsilon +4t\\
&= d_X(x,x')+\varepsilon'
\end{split}
\end{equation}
for any $x,x'\in X_0$. By \eqref{thm:LipEqUpto:eq6} and \eqref{thm:LipEqUpto:eq7}, the map $f$ is $1$-Lipschitz up to $\varepsilon'$. By \eqref{thm:LipEqUpto:eq3}, we have
\[
d_P(f_*m_X,m_Y)=d_P(\dot f_*m_{\dot X},m_Y)\le 2\varepsilon +4t\le 2\varepsilon'.
\]
This completes the proof.
\end{proof}

\begin{cor}\label{cor:upto_comp}
Let $X$, $Y$, and $Z$ be three mm-spaces, and $\varepsilon_1,\varepsilon_2\ge 0$. We assume that $f\colon \supp m_X\to \supp m_Y$ is Borel measurable and $1$-Lipschitz up to $\varepsilon_1$ and $g\colon \supp m_Y\to \supp m_Z$ is Borel measurable and $1$-Lipschitz up to $\varepsilon_2$. We also assume that $d_P(f_*m_X,m_Y)\le \varepsilon_1$ and $d_P(g_*m_Y,m_Z)\le \varepsilon_2$. Then for any $s>0$, there exists a Borel measurable map $h\colon \supp m_X\to \supp m_Z$ such that $h$ is $1$-Lipschitz up to $3(\varepsilon_1+\varepsilon_2)+s$ and $d_P(h_*m_X,m_Z)\le 6(\varepsilon_1+\varepsilon_2)+s$.
\end{cor}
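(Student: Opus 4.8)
The plan is to reduce the statement to the composition law for the relation $\succ_\varepsilon$ and then to recover an honest map by re-running the net-projection construction from the proof of Theorem \ref{thm:LipEqUpto} \eqref{thm:LipEqUpto:eq2}. By Definition \ref{dfn:KY}, the hypotheses on $f$ and $g$ say exactly that $X \succ^{KY}_{\varepsilon_1} Y$ and $Y \succ^{KY}_{\varepsilon_2} Z$. Applying Theorem \ref{thm:LipEqUpto} \eqref{thm:LipEqUpto:eq1} twice gives $X \succ_{3\varepsilon_1} Y$ and $Y \succ_{3\varepsilon_2} Z$, whence Theorem \ref{thm:Lip_main} \eqref{thm:Lip_main:eq3} yields $X \succ_{3(\varepsilon_1+\varepsilon_2)} Z$.

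For the return trip I would not merely quote Theorem \ref{thm:LipEqUpto} \eqref{thm:LipEqUpto:eq2}, because its statement only produces a map that is $1$-Lipschitz up to $2\varepsilon'$, which is too weak for the Lipschitz part of the asserted bound. Instead I would use what its proof actually constructs: given $X \succ_\varepsilon Z$ and any $\varepsilon' > \varepsilon$, setting $t := (\varepsilon'-\varepsilon)/4$, the proof builds a Borel measurable map $h = \dot f \circ \pi_{\mathcal N}\colon \supp m_X \to \supp m_Z$ that is $1$-Lipschitz up to $\varepsilon'$ and satisfies $d_P(h_*m_X, m_Z) \le 2\varepsilon + 4t = \varepsilon + \varepsilon'$. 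I would apply this with $\varepsilon := 3(\varepsilon_1+\varepsilon_2)$ and $\varepsilon' := 3(\varepsilon_1+\varepsilon_2) + s/2$ (legitimate since $s > 0$). The resulting $h$ is then $1$-Lipschitz up to $3(\varepsilon_1+\varepsilon_2)+s/2 \le 3(\varepsilon_1+\varepsilon_2)+s$ and satisfies $d_P(h_*m_X,m_Z) \le 6(\varepsilon_1+\varepsilon_2)+s/2 \le 6(\varepsilon_1+\varepsilon_2)+s$; since being $1$-Lipschitz up to $a$ and having Prokhorov distance at most $a$ are both preserved when $a$ increases, this is exactly the claim.

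The only delicate point is the bookkeeping in the second paragraph: one must observe that the statement of Theorem \ref{thm:LipEqUpto} \eqref{thm:LipEqUpto:eq2} is slightly lossier than its proof, and that it is precisely the sharper form (``$1$-Lipschitz up to $\varepsilon'$, with $d_P \le \varepsilon+\varepsilon'$'') that produces the asymmetric constants $3(\varepsilon_1+\varepsilon_2)+s$ versus $6(\varepsilon_1+\varepsilon_2)+s$; alternatively one could simply restate Theorem \ref{thm:LipEqUpto} \eqref{thm:LipEqUpto:eq2} in this sharper form and cite it. Everything else is a routine concatenation of Theorems \ref{thm:LipEqUpto} and \ref{thm:Lip_main} with the elementary monotonicity of the two error notions. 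I note in passing that the naive choice $h := g\circ f$ does not work directly: although $g\circ f$ satisfies $(g\circ f)^*d_Z \le d_X + \varepsilon_1+\varepsilon_2$ on $X_0\cap f^{-1}(Y_0)$ (with $X_0$, $Y_0$ the defining sets for $f$, $g$), that set need not have large $m_X$-measure, since $d_P(f_*m_X, m_Y)\le\varepsilon_1$ does not force $f_*m_X(Y_0)$ to be near $m_Y(Y_0)$; routing through $\succ_\varepsilon$ is what circumvents this.
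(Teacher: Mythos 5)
Your proof is correct and follows essentially the same route as the paper: Theorem \ref{thm:LipEqUpto} \eqref{thm:LipEqUpto:eq1} applied twice, then Theorem \ref{thm:Lip_main} \eqref{thm:Lip_main:eq3}, then the construction behind Theorem \ref{thm:LipEqUpto} \eqref{thm:LipEqUpto:eq2}. Your bookkeeping point is also accurate: the literal statement of \eqref{thm:LipEqUpto:eq2} only yields a map that is $1$-Lipschitz up to $2\varepsilon'$, which cannot by itself produce the asymmetric constants $3(\varepsilon_1+\varepsilon_2)+s$ versus $6(\varepsilon_1+\varepsilon_2)+s$; the paper's own proof simply cites \eqref{thm:LipEqUpto:eq2} and asserts these bounds, so it is implicitly invoking the sharper form (``$1$-Lipschitz up to $\varepsilon'$ with $d_P\le\varepsilon+\varepsilon'$'') that the proof of \eqref{thm:LipEqUpto:eq2} actually establishes, exactly as you spell out.
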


\begin{proof}
By Theorem \ref{thm:LipEqUpto} \eqref{thm:LipEqUpto:eq1}, we have $X\succ_{3\varepsilon_1} Y$ and $Y\succ_{3\varepsilon_2}Z$.
By Theorem \ref{thm:Lip_main} \eqref{thm:Lip_main:eq3}, $X\succ_{3(\varepsilon_1+\varepsilon_2)}Z$. Take any $s>0$. By Theorem \ref{thm:LipEqUpto} \eqref{thm:LipEqUpto:eq2}, there exists a Borel measurable map $h\colon \supp m_X\to \supp m_Z$ such that $h$ is $1$-Lipschitz up to $3(\varepsilon_1+\varepsilon_2)+s$ and $d_P(h_*m_X,m_Z)\le 6(\varepsilon_1+\varepsilon_2)+s$. This completes the proof.
\end{proof}

\begin{rem}
Note that $h=g\circ f$ does not necessarily hold in Corollary \ref{cor:upto_comp}. In fact, if we consider the following example, we  see that $g\circ f$ does not satisfy the expected conditions. Let $n\ge 3$ and $X=Y:=\{0,1/n\}$. We put $m_X:=\frac 12(\delta_0+\delta_{1/n})$, $m_Y:=(1-\frac1n)\delta_0+\frac 1n\delta_{1/n}$. We also put $Z:=n^2Y=(Y,n^2d_Y,m_Y)$ and $f=g:=\id_X$. Now, $f$ is $1$-Lipschitz and $g$ is $1$-Lipschitz up to $1/n$. Moreover, we have $d_P(f_*m_X,m_Y)\le 1/n$ and $d_P(g_*m_Y,m_Z)=0$. However, we have $\varepsilon \ge 1/2$ if $g\circ f$ is $1$-Lipschitz up to $\varepsilon$. We also have $d_P((g\circ f)_*m_X,m_Z)= 1/2-1/n$. 
\end{rem}
\begin{rem}\label{rem:lsCounter}
If we adopt Definition 3.3 in \cite{KY:bdd1} as the definition of $X\succ_\varepsilon Y$, Theorem \ref{thm:box-stable} does not hold in the case $\varepsilon> 0$. In fact, there is a counter example as follows. We denote by $*$ the one point mm-space.  We put $X_n:=*$ and $X:=*$ for any integer $n\ge 2$. We put $r:=1/4$ and
\[
Y_n:=\left(\{-r,0,r\},|\cdot|,\frac12\left(1-\frac1n\right)(\delta_{-r}+\delta_r)+\frac1n\delta_0\right)
\]
for any integer $n\ge 2$.
The sequence $(Y_n)_n$ $\square$-converges to $Y:=(\{-r,r\},|\cdot|,1/2(\delta_{-r}+\delta_r))$. We define $\square_\succ^{KY}(X,Y):=\inf\{\varepsilon\ge 0\mid X\succ_\varepsilon^{KY}Y\}$, where $X\succ^{KY}_\varepsilon Y$ is defined in Definition \ref{dfn:KY}.
Then we have $\square^{KY}_\succ(X_n,Y_n)=1/4$ and $\square^{KY}_\succ(X,Y)=1/2$ for any integer $n\ge 2$. In the case we adopt Definition \ref{dfn:lipE}, we have $\square_\succ(X_n,Y_n)=1/2(1-1/n)$ and $\square_\succ(X,Y)=1/2$ for any integer $n\ge 2$.
\end{rem}

The following Proposition \ref{prop:comp_lipu} is stronger than Corollary \ref{cor:upto_comp}. The proof of Proposition \ref{prop:comp_lipu} uses the method of the proof of Theorem 4.35 in \cite{Shioya:mmg}. Lemma 5.8 and Lemma 5.9 in \cite{KY:bdd2} are variations of Proposition \ref{prop:comp_lipu}.

\begin{prop}\label{prop:comp_lipu}
Let $X$, $Y$, and $Z$ be three mm-spaces, and $\varepsilon_1,\varepsilon_2\ge 0$. We assume that $f\colon \supp m_X\to \supp m_Y$ is Borel measurable and $1$-Lipschitz up to $\varepsilon_1$ and $g\colon \supp m_Y\to \supp m_Z$ is Borel measurable and $1$-Lipschitz up to $\varepsilon_2$. We also assume that $d_P(f_*m_X,m_Y)\le \varepsilon_1$ and $d_P(g_*m_Y,m_Z)\le \varepsilon_2$. Then for any $s>0$, there exists a Borel measurable map $h\colon \supp m_X\to \supp m_Z$ such that $h$ is $1$-Lipschitz up to $\varepsilon_1+\varepsilon_2+s$ and $d_P(h_*m_X,m_Z)\le \varepsilon_1+4\varepsilon_2+s$.
\end{prop}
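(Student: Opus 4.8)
The plan is to reduce to finite mm-spaces, compose the two maps there after ``snapping'' the image of $f$ onto a witness set for $g$, and then pass back to $X,Y,Z$ by the parametrization-and-weak-compactness argument used in the proof of Theorem 4.35 in \cite{Shioya:mmg}; this is the same circle of ideas that, in the present paper, underlies the proof of Theorem \ref{thm:Lip_main}\eqref{thm:Lip_main:eq1} (extracting a Borel map from a coupling supported near a non-expanding set) together with the lower-semicontinuity Lemmas \ref{lem:DisLsConti} and \ref{lem:KP_ProkLsConti}.

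First, fixing a small $\delta>0$, I would repeat the net construction from the proof of Theorem \ref{thm:LipEqUpto}\eqref{thm:LipEqUpto:eq2}: by Lemmas $4.18$, $3.4$ and $4.19$ of \cite{Shioya:mmg} one obtains finite mm-spaces $\dot X,\dot Y,\dot Z$ with $\square(X,\dot X),\ \square(Y,\dot Y),\ \square(Z,\dot Z)<\delta$, together with Borel maps $\dot f\colon\supp m_{\dot X}\to\supp m_{\dot Y}$ and $\dot g\colon\supp m_{\dot Y}\to\supp m_{\dot Z}$ obtained from $f$ and $g$ by pre- and post-composing with Borel nearest-point projections onto the nets, such that $\dot f$ is $1$-Lipschitz up to $\varepsilon_1+c\delta$ with $d_P(\dot f_*m_{\dot X},m_{\dot Y})\le\varepsilon_1+c\delta$, and analogously for $\dot g$, with a universal constant $c$. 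In the finite case I would then argue combinatorially: fix a finite set of points $Y_0\subset\supp m_{\dot Y}$ with $m_{\dot Y}(Y_0)\ge 1-\varepsilon_2-c\delta$ realizing that $\dot g$ is $1$-Lipschitz up to $\varepsilon_2+c\delta$; from $d_P(\dot f_*m_{\dot X},m_{\dot Y})\le\varepsilon_1+c\delta$ one deduces $m_{\dot X}(A)\ge 1-\varepsilon_1-\varepsilon_2-c'\delta$ for $A:=\{x:d_{\dot Y}(\dot f(x),Y_0)\le\varepsilon_1+c\delta\}$; pick a Borel map $p$ sending $U_{\varepsilon_1+c\delta}(Y_0)$ into $Y_0$, and set $h_0:=\dot g\circ p\circ\dot f$ on $A$ and $h_0:=z_0$ off $A$ for a fixed $z_0\in\supp m_{\dot Z}$. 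Feeding the three hypotheses into the triangle inequality on $A\times A$ gives that $h_0$ is $1$-Lipschitz up to $\varepsilon_1+\varepsilon_2+c''\delta$, and comparing $(h_0)_*m_{\dot X}$ with $m_{\dot Z}$ through $m_{\dot Y}$ by a transport estimate gives $d_P((h_0)_*m_{\dot X},m_{\dot Z})\le\varepsilon_1+4\varepsilon_2+c''\delta$, the coefficient $4$ collecting the Prokhorov error of $\dot f$, the matching of $\dot f_*m_{\dot X}$ to $m_{\dot Y}$ on $Y_0$, the Prokhorov error of $\dot g$, and the mass of $\supp m_{\dot X}\setminus A$.

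Finally, taking $\delta=1/n\to 0$ I would obtain $\square$-convergent approximations $\dot X_n\to X$, $\dot Y_n\to Y$, $\dot Z_n\to Z$ and maps $h_n\colon\supp m_{\dot X_n}\to\supp m_{\dot Z_n}$ satisfying the bounds above with $c''/n$ in place of $c''\delta$, and extract the desired $h$ as a limit of $(h_n)_n$ by the method of Theorem 4.35 in \cite{Shioya:mmg}: parametrize $X,Z$ and all $\dot X_n,\dot Z_n$ by $I=[0,1)$ so that the pulled-back metrics converge in measure, pull each $h_n$ back to a measure-preserving-type map of $I$, pass to a weak limit of the associated probability measures on $I\times I$, and argue as in the proof of Theorem \ref{thm:Lip_main}\eqref{thm:Lip_main:eq1} that the limit measure is carried by the graph of a Borel map, which descends to $h\colon\supp m_X\to\supp m_Z$; the two estimates then pass to the limit by Lemma \ref{lem:DisLsConti} and Lemma \ref{lem:KP_ProkLsConti}, with $c''/n$ absorbed into an arbitrary $s>0$. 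The step I expect to be the main obstacle is precisely this constant bookkeeping, both in the finite case and through the limit: a naive pointwise snapping of $\dot f$ onto $Y_0$ loses an extra multiple of $\varepsilon_1$ via the triangle inequality, so the snapping and the measure matching must be carried out on the parametrized level, where $I$ is non-atomic, in order to keep the additive errors at $\varepsilon_1+\varepsilon_2+s$ and $\varepsilon_1+4\varepsilon_2+s$; one must also verify that the weak-limit manipulations do not eat more than the slack $s$.
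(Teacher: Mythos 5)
There is a genuine gap, and it sits exactly where you flag it yourself. Your construction snaps $\dot f(x)$ onto $Y_0$ from distance up to $\varepsilon_1+c\delta$; this displaces each of the two points entering the Lipschitz estimate by that amount, so the triangle inequality yields $1$-Lipschitz up to $3\varepsilon_1+\varepsilon_2+O(\delta)$, not $\varepsilon_1+\varepsilon_2+s$. Your proposed remedy --- carrying out the snapping ``on the parametrized level, where $I$ is non-atomic'' --- is not an argument: the loss comes from moving points by $\varepsilon_1$ inside $Y$, and has nothing to do with atoms or with how $X$ is parametrized. The paper's proof needs no discretization and no limit at all: it sets $t:=s/8$, takes a Borel measurable $t$-projection $\pi\colon\supp m_Y\to Y_0$ (Lemma 3.5 in \cite{Shioya:mmg}), defines $h:=g\circ\pi\circ f$ directly on $\supp m_X$, and restricts attention to $\tilde{X_0}:=X_0\cap f^{-1}(B_t(Y_0))$, so that the displacement caused by $\pi$ on the relevant set is $O(t)=O(s)$ and the Lipschitz error stays at $\varepsilon_1+\varepsilon_2+s$. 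The hypothesis $d_P(f_*m_X,m_Y)\le\varepsilon_1$ is used only to bound $m_X(\tilde{X_0})$ from below, and the push-forward estimate is obtained by chaining $d_P((g\circ\pi\circ f)_*m_X,(g\circ\pi)_*m_Y)\le\varepsilon_1+2\varepsilon_2+8t$ (Lemma 4.36 in \cite{Shioya:mmg}) with $d_P((g\circ\pi)_*m_Y,g_*m_Y)\le\varepsilon_2$ (via the Ky Fan metric) and $d_P(g_*m_Y,m_Z)\le\varepsilon_2$.

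Your final limit-extraction step also fails as described. The graph argument in the proof of Theorem \ref{thm:Lip_main} \eqref{thm:Lip_main:eq1} hinges on $\dis_\succ(\supp\pi)=0$: uniqueness of $f(x)$ comes from $d_Y(y,y')\le d_X(x,x)=0$. A limit coupling with distortion $\varepsilon_1+\varepsilon_2+s>0$ on a large set is in general not carried by the graph of any map, so what your limit produces is only the relation $X\succ_{\varepsilon_1+\varepsilon_2+s}Z$ in the sense of Definition \ref{dfn:lipE}. Converting that back into an actual Borel map is Theorem \ref{thm:LipEqUpto} \eqref{thm:LipEqUpto:eq2}, and that conversion costs a factor of $2$ in the Prokhorov bound: one lands at $d_P(h_*m_X,m_Z)\le 2(\varepsilon_1+\varepsilon_2)+s'$, which exceeds the required $\varepsilon_1+4\varepsilon_2+s'$ whenever $\varepsilon_1>2\varepsilon_2$. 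In other words, your route, even after repairing the snapping, reproves (a version of) Corollary \ref{cor:upto_comp}; the paper states explicitly that Proposition \ref{prop:comp_lipu} is stronger than that corollary, and the extra strength comes precisely from composing maps directly rather than passing through couplings.
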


\begin{proof}
Since $f$ is $1$-Lipschitz up to $\varepsilon_1$, there exists a Borel set $X_0\subset \supp m_X$ such that
\begin{equation}\label{prop:comp_lipu:eq1}
f^*d_Y\le d_X+\varepsilon_1\ \text{on $X_0\times X_0$ \quad and}\quad m_X(X_0)\ge 1-\varepsilon_1.
\end{equation}
Since $g$ is $1$-Lipschitz up to $\varepsilon_2$, there also exists a Borel set $Y_0\subset \supp m_Y$ such that 
\begin{equation}\label{prop:comp_lipu:eq2}
g^*d_Z\le d_Y+\varepsilon_2 \ \text{on $Y_0\times Y_0$ \quad and}\quad m_Y(Y_0)\ge 1-\varepsilon_2.
\end{equation}
Take any $s>0$ and put $t:=s/8$. By Lemma 3.5 in \cite{Shioya:mmg}, there exists a Borel measurable $t$-projection $\pi\colon \supp m_Y\to Y_0$ to $Y_0$ such that $\pi|_{Y_0}=\id_{Y_0}$.  Put $\tilde{X_0}:=X_0\cap f^{-1}(B_t(Y_0))$ and $h:=g\circ \pi\circ f\colon \supp m_X\to \supp m_Z$. It suffices to prove that
\begin{align}
h^*d_Z-d_X&\le \varepsilon_1+\varepsilon_2+s \quad\text{on $\tilde{X_0}\times \tilde{X_0}$},\label{prop:comp_lipu:eq3}\\
1-m_X(\tilde{X_0})&\le \varepsilon_1+\varepsilon_2+s,\label{prop:comp_lipu:eq4}\\
d_P(h_*m_X,m_Z)&\le \varepsilon_1+4\varepsilon_2+s.\label{prop:comp_lipu:eq5}
\end{align}

Let us prove \eqref{prop:comp_lipu:eq3}. Take any $x,x'\in \tilde{X_0}$. By \eqref{prop:comp_lipu:eq1}, \eqref{prop:comp_lipu:eq2}, and the definition of $\pi$, we have
\begin{align*}
d_Z(h(x),h(x'))&\le d_Y(\pi\circ f(x),\pi\circ f(x'))+\varepsilon_2,\\
&\le d_Y(f(x),f(x'))+\varepsilon_2\\
&\quad +d_Y(\pi\circ f(x),f(x))+d_Y(f(x'),\pi\circ f(x'))\\
&\le d_Y(f(x),f(x'))+\varepsilon_2\\
&\quad +d_Y(f(x),Y_0)+d_Y(f(x),Y_0)+2t\\
&\le d_Y(f(x),f(x'))+\varepsilon_2+4t\\
&\le d_X(x,x')+\varepsilon_1+\varepsilon_2+s.
\end{align*}
This implies \eqref{prop:comp_lipu:eq3}. Next let us prove \eqref{prop:comp_lipu:eq4}. Since we have $d_P(f_*m_X,m_Y)\le \varepsilon_1$, we have
\begin{align*}
m_X(\tilde{X_0})&\ge  m_X(X_0)+f_*m_X(B_t(Y_0))-1\\
&=m_Y(Y_0)-\varepsilon_1\ge 1-\varepsilon_1-\varepsilon_2.
\end{align*}
This implies \eqref{prop:comp_lipu:eq4}. Let us prepare
\begin{align}
d_P((g\circ\pi\circ f)_*m_X,(g\circ\pi)_*m_Y)&\le \varepsilon_1+2\varepsilon_2+8t,\label{prop:comp_lipu:eq6}\\
d_P((g\circ\pi)_*m_Y,g_*m_Y)&\le \varepsilon_2\label{prop:comp_lipu:eq7}
\end{align}
to prove \eqref{prop:comp_lipu:eq5}.
For $y,y'\in B_t(Y_0)$, we have
\begin{equation}\label{prop:comp_lipu:eq8}
\begin{split}
d_Z(g\circ\pi(y),g\circ\pi(y'))&\le d_Y(\pi(y),\pi(y'))+\varepsilon_2\\
&\le d_Y(y,y')+\varepsilon_2+4t.
\end{split}
\end{equation}
By Lemma 4.36 in \cite{Shioya:mmg} and \eqref{prop:comp_lipu:eq8}, we have
\begin{align*}
d_P((g\circ\pi\circ f)_*m_X,(g\circ\pi)_*m_Y)&\le d_P(f_*m_X,m_Y)+2(\varepsilon_2+4t)\\
&\le \varepsilon_1+2\varepsilon_2+8t.
\end{align*}
This implies \eqref{prop:comp_lipu:eq6}. Since we have $
d_Z(g\circ \pi(y),g(y))=0$ for any $y\in Y_0$ and $m_Y(Y_0)\ge 1-\varepsilon_2$, we have
\begin{equation}\label{prop:comp_lipu:eq9}
d_{\rm KF}^{m_Y}(g\circ\pi,g)\le \varepsilon_2.
\end{equation}
By \eqref{prop:comp_lipu:eq9} and  Lemma 1.26 in \cite{Shioya:mmg}, we obtain
\[
d_P((g\circ\pi)_*m_Y,g_*m_Y)\le d_{\rm KF}^{m_Y}(g\circ\pi,g)\le \varepsilon_2.
\]
This implies \eqref{prop:comp_lipu:eq7}. By \eqref{prop:comp_lipu:eq6}, \eqref{prop:comp_lipu:eq7}, and $d_P(g_*m_Y,m_Z)\le \varepsilon_2$, we obtain \eqref{prop:comp_lipu:eq5}. This completes the proof.
\end{proof}
  
\end{document}